\newtheorem{theorem}{Theorem}[section]
\newtheorem{corollary}[theorem]{Corollary}
\newtheorem{lemma}[theorem]{Lemma}
\newtheorem{proposition}[theorem]{Proposition}
\theoremstyle{definition}
\newtheorem{remark}[theorem]{Remark}
\newtheorem{definition}[theorem]{Definition}
\newcommand{\CP}{\mathbb{C}P}
\newcommand{\HP}{\mathbb{H}P}
\newcommand{\CaP}{\mathrm{Ca}P^2}
\newcommand{\KP}{\mathbb{K}P}
\newcommand{\RH}{\mathbb{R}H}
\newcommand{\CH}{\mathbb{C}H}
\newcommand{\HH}{\mathbb{H}H}
\newcommand{\CaH}{\mathrm{Ca}H^2}
\newcommand{\KH}{\mathbb{K}H}
\def\S{\mathbb S}
\def\R{\mathbb R}
\def\C{\mathbb C}
\def\H{\mathbb H}
\def\Ca{\mathrm{Ca}}
\def\End{\mathrm{End}}
\def\K{\mathbb K}
\def\N{\mathbb N}
\def\div{{\rm div}} 
\newcommand{\si}{\mathrm{s}}
\newcommand{\co}{\mathrm{c}}
\newcommand{\tr}{\mathrm{Tr}}
\newcommand{\diam}{\mathrm{diam}}
\newcommand{\dd}[2]{\frac{\partial #1}{\partial #2}}
\numberwithin{equation}{section}
\begin{document}

\title[A spectral characterization of geodesic balls in non-compact ROSS]{A spectral characterization of geodesic balls in non-compact rank one symmetric spaces}

\author{Philippe Castillon, Berardo Ruffini}

\subjclass[2010]{46E35, 35P30, 39B72}

\keywords{Brock-Weinstock inequality, harmonic manifold, isoperimetric inequality}

\begin{abstract}
	In constant curvatures spaces, there are a lot of characterizations of geodesic balls as optimal domain for shape optimization problems. Although it is natural to expect similar characterizations in rank one symmetric spaces, very few is known in this setting.
	
	In this paper we prove that, in a non-compact rank one symmetric space, the geodesic balls uniquely maximize the first nonzero Steklov eigenvalue among the domains of fixed volume, extending to this context a result of Brock in the Euclidean space. Then we show that a stability version of the ensuing Brock-Weinstock inequality holds. The idea behind the proof is to exploit a suitable weighted isoperimetric inequality which we prove to hold true, as well as in a stability form, on harmonic manifolds.
	
	Eventually we show that, in general, the geodesic balls are not global maximizers on the standard sphere.
\end{abstract}

\maketitle

%
%
%
%
\section{Introduction}

%
\subsection*{Shape optimization problems}
A shape optimization problem on a Riemannian manifold is simply an optimization problem of the form
\[
\min_{\Omega\in\mathcal A} F(\Omega)\ \ \ \ \mbox{ or }\ \ \ \ \max_{\Omega\in\mathcal A} F(\Omega)
\]
where the class of optimization $\mathcal A$ is a subset of the  powerset of the ambient space and $F$ is a functional on $\mathcal A$. The most famous instance of this kind of problems is the isoperimetric problem where $F(\Omega)$ is the volume of $\partial\Omega$ and $\mathcal{A}$ is a class of  domains of fixed measure. Another class is constituted by the  spectral optimization problems	where $F$  depends on the spectrum of an elliptic operator. In this class the archetype example  is the minimization of $\lambda_1(\Omega)$, the first eigenvalue of the Dirichlet-Laplacian operator, under a volume constraint. Its solution (due to Faber and Krahn) tells us that, on $\R^n$, $\lambda_1(\Omega)$ is minimized by Euclidean balls. Another famous result is the Sz\"ego-Weinberger inequality, stating that among sets of prescribed volume, the ball maximize the first non-zero eigenvalue of the Neumann-Laplacian. See \cite{Henrot} for a comprehensive guide on spectral optimization problems in flat spaces.

These optimization problems were first considered in the Euclidean space and then extended to constant curvature spaces, giving rise to variational characterization of geodesic balls in the real hyperbolic space $\R H^n$ and in the round sphere $\S^n$ (see for example \cite[chapter 2]{Burago-Zalgaller} and \cite[chapter 6]{Chavel} for the isoperimetric and Faber-Krahn inequalities in constant curvature spaces).

Other Riemannian manifolds where this kind of problems are natural are the rank one symmetric spaces (noted ROSS in the sequel): these spaces are two-point homogeneous, that is, for each two couple of points $(x,y)$ and $(x',y')$ such that $d(x,y)=d(x',y')$ there exists an isometry that  brings $x$ to $x'$ and $y$ to $y'$. This implies that their geodesic spheres are homogeneous, and in turn suggests that their geodesic balls are good candidate for being optimal domains of isoperimetric and spectral problems (in particular, the geodesic spheres have constant mean curvature). However, beside the constant curvature case, almost nothing is known for the isoperimetric problem or for the spectral optimization problems cited above (cf. \cite[section 7.1.2]{Berger} for a discussion on the isoperimetric problem in compact ROSS).

%
\subsection*{The Steklov optimization problem}
Let $M$ be a Riemannian manifold of dimension $m$ and let $\Omega\subset M$ be an connected open bounded set with Lipschitz boundary. The Steklov eigenvalue problem consists in finding the real numbers $\sigma$ for which the boundary value problem
\[
\begin{cases}
-\Delta u=0&\text{in $\Omega$}\\
\partial_\nu u=\sigma u&\text{on $\partial\Omega$}
\end{cases}
\] 
has a non-trivial solution $u$, where $\Delta$ is the Laplace-Beltrami operator on $\Omega$ and $\partial_\nu u$ is the normal derivative of $u$ on  $\partial\Omega$. The Steklov eigenvalues of $\Omega$ form an increasing sequence $0=\sigma_0(\Omega)<\sigma_1(\Omega)\le\sigma_2(\Omega)\le\dots$ diverging  to $+\infty$ (see section \ref{section-Steklov} for a more comprehensive introduction to the spectrum of the Steklov operator).

Regarding shape optimization problems for the Steklov eigenvalues of Euclidean domains, a first result was given  in the dimension $2$ by Weinstock in $1954$ \cite{Weinstock}, who showed that the ball maximizes $\sigma_1(\Omega)$ among simply connected domains with prescribed perimeter (thanks to the isoperimetric inequality, this turns out to imply the same result among domains with fixed volume). In $2001$, Brock showed the same result without topological nor dimensional constraints, among domains with fixed volume \cite{Brock}. Later on, in $2012$, Brasco, De Philippis and the second author exhibited a new simple proof of the Brock-Weinstock inequality, which allowed them to get a stability version of it \cite{Brasco-DePhilippis-Ruffini}.

It is worth recalling that the problem of maximizing the first Steklov eigenvalue with perimeter constraint has been recently solved in any dimension among convex sets \cite{bucuretal} but it is open in dimension greater than $2$ in its full generality. In dimension $2$, it is known that the ball is not a global minimizer. See \cite{Henrot} for the state of the art on spectral shape optimization problems in the Euclidean space.

In a non-Euclidean setting, the Steklov problem was mainly studied as an optimization problem on the space of Riemannian metrics: given a differentiable manifold $\Omega$ with boundary, find the metric maximizing $\sigma_1(\Omega)$ under a volume or perimeter constraint. See \cite{Girouard-Polterovich} for a recent survey on the Steklov eigenvalue problem in Riemannian geometry.
	Weinstock-like inequalities in Riemannian manifolds were first considered by J.F. Escobar in \cite{Escobar1,Escobar2,Escobar3} and later by Binoy and G. Santhanam in \cite{Binoy-Santhanam} where the authors prove that the geodesic balls maximize $\sigma_1(\Omega)$ among domains of fixed volume in non-compact ROSS.

%
\subsection*{The main results}

	The aim of this paper is to prove the quantitative version of the Brock-Weinstock inequality in non-compact ROSS. Moreover, we investigate the compact case and prove that this inequality does not hold on the sphere by computing the first Steklov eigenvalue of a spherical strip.
	The main theorem is the following quantitative inequality:
	\begin{theorem}\label{th:main1}
		Let $M$ be a non-compact ROSS. For any $v>0$, there exists a positive constant $C=C(M,v)$ such that, for any domain $\Omega\subset M$ with $|\Omega|=v$ we have
		\[
		\sigma_1(\Omega)\left(1+C|\Omega\setminus B|^2\right)\le\sigma_1(B),
		\]
		where $B$ is a geodesic ball with $|B|=|\Omega|$.
	\end{theorem}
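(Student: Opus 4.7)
The plan is to adapt to non-compact ROSS the strategy of Brasco, De Philippis and the second author from the Euclidean case \cite{Brasco-DePhilippis-Ruffini}, exploiting the two-point homogeneity (and in particular the harmonicity) of these spaces. First I would identify the first Steklov eigenfunctions on a geodesic ball $B$ of volume $v$ centered at a point $y$: by homogeneity of geodesic spheres, these split as $\phi_{v,y}^i(x)=f(d(x,y))\,\psi_i(\theta)$, where $\{\psi_i\}$ is a basis of the corresponding eigenspace on the unit tangent sphere at $y$ and $f$ is the radial factor uniquely determined by the Steklov boundary condition on $\partial B$. The multiplicity and shape of this family is dictated by the isotropy representation of the ROSS, and the eigenvalue $\sigma_1(B)$ is an explicit function of $v$ and the geometry.

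Given $\Omega$, the next step is to use these functions as test functions in the variational characterization
\[
\sigma_1(\Omega)\le\frac{\int_\Omega|\nabla\phi|^2\,dV}{\int_{\partial\Omega}\phi^2\,d\sigma},
\]
after choosing the center $y$ via a Hersch-type topological argument so that $\int_{\partial\Omega}\phi_{v,y}^i\,d\sigma=0$ simultaneously for all $i$. Summing the resulting inequalities over $i$ and exploiting the fact that $\sum_i\psi_i(\theta)^2$ is constant on the tangent sphere produces a clean estimate of the form
\[
\sigma_1(\Omega)\int_{\partial\Omega}W(d(x,y))\,d\sigma\le\int_\Omega\Xi(d(x,y))\,dV,
\]
with explicit positive radial functions $W$ and $\Xi$, equality holding when $\Omega=B$. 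Since $\Xi$ is radially monotone (or can be rearranged to be so) and the underlying manifold is harmonic, the right-hand side is controlled by $\int_B\Xi(d(\cdot,y))\,dV$ via a straightforward rearrangement, so that the whole problem reduces to bounding $\int_{\partial\Omega}W\,d\sigma$ from below.

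The heart of the proof is then a weighted isoperimetric inequality with stability of the form
\[
\int_{\partial\Omega}W(d(x,y))\,d\sigma\ge\int_{\partial B}W(d(x,y))\,d\sigma+c\,|\Omega\setminus B|^2,
\]
valid on any harmonic manifold for the specific weights $W$ that arise from the construction above. Once this is established, combining with the computation of $\sigma_1(B)$ and dividing yields precisely the announced quantitative Brock-Weinstock inequality. The main technical obstacle will be proving this weighted isoperimetric inequality together with its sharp stability term. The unweighted sharp inequality should follow from a divergence argument building a vector field $X$ with $\div X=1$ in $\Omega$ and $|X|\le W$ on $\partial\Omega$, using that on a harmonic manifold the mean curvature of geodesic spheres is a radial quantity matching nicely with the radial derivative of $W$; the quantitative stability refinement will require either a Fuglede-type expansion around $B$ combined with a selection principle to upgrade it to general competitors, or a quantitative calibration argument, and this is where the most delicate work is expected.
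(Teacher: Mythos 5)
Your plan is essentially the paper's proof: radial-times-spherical-harmonic test functions $F_i=a(r)f_i(w)$ with $a(r)=v(r)/v'(r)$, a choice of center making them orthogonal to constants on $\partial\Omega$, summation over $i$ to reduce the Rayleigh quotient to $\sigma_1(\Omega)P_o(\Omega)\le\int_\Omega H(r)$ with $H$ radial and non-increasing, and a quantitative weighted isoperimetric inequality $P_o(\Omega)-P_o(B)\ge C|\Omega\setminus B|^2$ as the engine. Two remarks on the differences. For the center, the paper does not use a Hersch-type topological argument but minimizes $y\mapsto\int_{\partial\Omega}b(d(x,y))\,dv_{\partial\Omega}$ with $b'=a$; the vanishing gradient at the minimum is exactly the orthogonality condition, and properness of this functional (because $a$ is increasing) gives existence. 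For the stability, the paper follows your ``quantitative calibration'' alternative rather than Fuglede plus selection principle: the calibration by $X=a(r)^2\partial_r$ already yields $P_o(\Omega)-P_o(B)\ge\int_{\Omega\triangle B}|G(r)-G(R)|$ with $G=\mathrm{div}\,X$ increasing, which is bounded below by comparing $\Omega\triangle B$ with annular shells and Taylor-expanding $s\mapsto s\,a(v^{-1}(s))$ to second order; a separate continuity lemma handles competitors far from the ball. Note that your specific suggestion of a field with $\mathrm{div}\,X=1$ and $|X|\le W$ would only give $|\Omega|\le P_o(\Omega)$, which does not compare $\Omega$ to $B$; the argument needs $\mathrm{div}\,X=G(r)$ strictly increasing so that $\int_\Omega G\ge\int_B G$ by rearrangement.

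There is one genuine gap you should not gloss over: you assert that the functions $f(d(x,y))\psi_i(\theta)$ built from \emph{first} spherical harmonics realize $\sigma_1(B)$, but this is precisely the delicate point and the reason the theorem is restricted to non-compact ROSS. The induced metric on a geodesic sphere of a ROSS is a Berger-type metric, and whether the first spherical harmonics give the smallest nonzero Laplace eigenvalue of that sphere depends on the rescaling parameter of the fibers ($\cosh r>1$ in the non-compact case, $\cos r<1$ in the compact case); the paper needs Tanno's and B\'erard-Bergery--Bourguignon's computations of Berger sphere spectra plus an ODE comparison lemma for the Riccati equation $\sigma_k'+\sigma_k^2+(m-1)h\sigma_k=\lambda_k$ to transfer this to the Steklov eigenvalues of the ball. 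On the compact sphere this identification fails for large balls, which is exactly how the paper produces the counterexample of Theorem \ref{th:main4}. Without this step your test functions might be orthogonal to the wrong eigenspace and the final comparison $\sigma_1(\Omega)\le Q(B)/P_o(B)$ would not equal $\sigma_1(B)$.
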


As a consequence, we get the Brock-Weinstock inequality in non-compact ROSS, provided of equality cases.

The main point in order to prove Theorem \ref{th:main1} is that, following the proof in \cite{Brock}, the maximality of the ball for the first Steklov eigenvalue ensues from a {\em  weighted isoperimetric inequality}, which takes the form, in the euclidean setting,
\[
\int_{\partial\Omega}|x|^2\,dx\ge \int_{\partial B}|x|^2\,dx,
\]
whenever $B$ is the ball centered at the origin of the same measure as $\Omega$. 

	This kind of isoperimetric inequalities were first considered in \cite{B-B-M-P}, and then used in \cite{Brasco-DePhilippis-Ruffini} to prove the quantitative Brock-Weinstock inequality in the Euclidean space.
The proof after this idea, after  suitable modifications, works as well in the setting of ROSS. Thus the problem reduces to  prove  (a suitable formulation of) the weighted isoperimetric inequality in a Riemannian setting. This is precisely the core of the following result which holds on harmonic manifolds (see section 2.2 for their definition and main properties) and thus on ROSS.
\begin{theorem}\label{th:main2}
	Let $M$ be an harmonic manifold and let $o\in M$ be some fixed point. For any domain $\Omega\subset M$, if $B$ is the ball centered in $o$ with radius $R$ such that $|B|=|\Omega|$, then we have
	\[
	P_o(\Omega)=\int_{\partial\Omega}\left(\frac{v(r)}{v'(r)}\right)^2\,dv_{\partial\Omega} \ge 	\int_{\partial B}\left(\frac{v(R)}{v'(R)}\right)^2\,dv_{\partial B}=P_o(B),
	\]
	where $r=d(o,.)$ is the distance function to $o$, and $v(t)=|B_t|$ is the volume of a ball of radius $t$. Moreover, equality holds if and only if $\Omega=B$.
\end{theorem}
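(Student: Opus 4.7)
My plan is to follow the Euclidean calibration strategy from \cite{B-B-M-P,Brasco-DePhilippis-Ruffini} in its Riemannian guise. Write $r(x)=d(o,x)$, set $f(r)=v(r)/v'(r)$, and let $h(r)=v''(r)/v'(r)$, so that harmonicity of $M$ reads $\Delta r=h(r)$. For any smooth $\varphi$, $\div(\varphi(r)\nabla r)=\varphi'(r)+\varphi(r)h(r)$, and direct substitution gives the two key identities
\begin{align*}
\div\bigl(f(r)\nabla r\bigr)&=f'(r)+f(r)h(r)=1,\\
\div\bigl(f(r)^2\nabla r\bigr)&=2f(r)f'(r)+f(r)^2 h(r)=f(r)\bigl(1+f'(r)\bigr).
\end{align*}
Integrating the second identity over a geodesic ball $B_R$ centered at $o$ yields $\int_{B_R}f(r)(1+f'(r))\,dV=\int_0^R(fv)'(s)\,ds=f(R)v(R)=P_o(B_R)$, so $P_o(B)$ already admits a purely interior representation.

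Applying the divergence theorem to $f(r)^2\nabla r$ on a general $\Omega$ and using $\langle\nabla r,\nu\rangle\le|\nabla r|=1$,
\[
\int_\Omega f(r)\bigl(1+f'(r)\bigr)\,dV=\int_{\partial\Omega}f(r)^2\langle\nabla r,\nu\rangle\,d\sigma \le \int_{\partial\Omega}f(r)^2\,d\sigma=P_o(\Omega),
\]
with equality iff $\nu=\nabla r$ a.e.\ on $\partial\Omega$, i.e.\ iff $\partial\Omega$ is a union of distance spheres from $o$. The theorem is thus reduced to the weighted volume comparison $\int_\Omega g(r)\,dV\ge\int_B g(r)\,dV$ with $g(r):=f(r)(1+f'(r))$. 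Using $f'=1-fh$, one gets $g=2f-f^2h$ and $g'=2(f')^2-f^2h'$. On harmonic manifolds the mean curvature $h(r)$ of distance spheres is nonincreasing, so $h'\le 0$ and $g'\ge 0$. Setting $\mu(t)=|\Omega\cap B_t|$ and $\mu_B(t)=\min\{v(t),|\Omega|\}$, one has $\mu(t)\le\mu_B(t)$ with $\mu(\infty)=\mu_B(\infty)$, so a layer-cake integration gives
\[
\int_\Omega g\,dV-\int_B g\,dV=\int_0^\infty g'(t)\bigl(\mu_B(t)-\mu(t)\bigr)\,dt\ge 0,
\]
closing the proof of the inequality.

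For the equality case, sharpness of the divergence step forces $\partial\Omega$ to lie in level sets of $r$, while sharpness of the volume comparison, together with the strict positivity of $g'$ away from $o$, forces $\mu\equiv\mu_B$; the two conditions together readily imply $\Omega=B$. I expect the monotonicity of $g$ (equivalently $h'\le 0$) to be the main hurdle: for a non-compact ROSS it is immediate from the explicit hyperbolic or complex-hyperbolic formulas for the radial volume density, but in the general harmonic setting it must be extracted from the specific structure of $v(r)$; the remaining ingredients are essentially a careful divergence-theorem bookkeeping plus a standard layer-cake rearrangement.
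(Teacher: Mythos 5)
Your proposal is correct and follows essentially the same route as the paper: the calibration field $a(r)^2\partial_r$ with $a=v/v'$, the identity $\div(a^2\partial_r)=a(1+a')$, the divergence theorem giving $\int_\Omega G\le P_o(\Omega)$ with equality on balls, and monotonicity of $G$ from $2(a')^2-f^2h'\ge0$; your layer-cake comparison of $\int_\Omega G$ with $\int_B G$ is just a cosmetic variant of the paper's direct splitting over $\Omega\setminus B$ and $B\setminus\Omega$. The one ingredient you flag as a hurdle, $h'\le 0$, is exactly Proposition \ref{prop-harm_mfd_properties}$(ii)$, which holds for \emph{non-compact} harmonic manifolds and is quoted from the literature rather than reproved, so your reliance on it matches the paper's.
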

Notice that on harmonic manifolds (and thus on ROSS) the volume of a ball does not depend on the center (cf. section 2.2). 

Once the isoperimetric inequality, and as a consequence, the Brock-Weinstock inequality, are settled down, a natural question is that of the uniqueness of the solution. In this paper we are able to show the following quantitative version of the weighted isoperimetric inequality which, as a consequence, gives rise to the quantitative Brock-Weinstock inequalities of Theorem \ref{th:main1}. 
\begin{theorem}\label{th:main3}
	Let  $M$ be a non-compact harmonic manifold and $o\in M$ be some fixed point. Let $\Omega\subset M$ be a domain and $B$ be the ball centered in $o$ such that $|B|=|\Omega|$. Then there exists a constant $C=C(|\Omega|,M)$ such that 
	$$
	P_o (\Omega)-P_o (B)\ge C|\Omega\setminus B|^2.
	$$
\end{theorem}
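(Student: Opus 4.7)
The plan is to follow the approach of \cite{Brasco-DePhilippis-Ruffini}: obtain the quantitative estimate by tracking the deficits in the proof of Theorem \ref{th:main2}. Setting $g(r) := v(r)/v'(r)$, consider the radial vector field $Y := g(r)^2\nabla r$. On the boundary of the centered ball $\partial B$ one has $\langle Y,\nu\rangle = g(r)^2$, while for a general domain the pointwise inequality $\langle Y,\nu\rangle \le g(r)^2$ holds (since $|\nabla r|=1$). On a harmonic manifold a direct computation using $\Delta r = v''(r)/v'(r)$ gives
$$\psi(r) := \mathrm{div}(Y) = g(r)\bigl(1 + g'(r)\bigr).$$
The divergence theorem then yields
$$P_o(\Omega) = \int_{\partial\Omega} g(r)^2\,dv_{\partial\Omega} \ge \int_{\partial\Omega}\langle Y,\nu\rangle\,dv_{\partial\Omega} = \int_\Omega \psi(r)\,dV,$$
and specializing to $B$ gives $P_o(B) = \int_B\psi(r)\,dV$. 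Provided $\psi$ is increasing in $r$, the radial rearrangement inequality gives $\int_\Omega\psi\,dV \ge \int_B\psi\,dV = P_o(B)$, recovering Theorem \ref{th:main2}. The quantitative version will come from a \emph{stability} version of this rearrangement step.

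Writing $A := |\Omega\setminus B| = |B\setminus\Omega|$, I decompose
$$\int_\Omega\psi\,dV - \int_B\psi\,dV = \int_{\Omega\setminus B}\bigl(\psi(r)-\psi(R)\bigr)\,dV + \int_{B\setminus\Omega}\bigl(\psi(R)-\psi(r)\bigr)\,dV,$$
both integrands being non-negative. A bathtub rearrangement shows that, among sets with $A$ prescribed, this functional is minimized when $\Omega\setminus B$ and $B\setminus\Omega$ are concentric shells adjacent to $\partial B$; evaluating on such extremal configurations and expanding to second order in the shell thickness yields a lower bound of the form $c(M,|\Omega|)\cdot A^2$, valid whenever $A$ is below a threshold $A_0$. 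For $A > A_0$, I argue by compactness and contradiction: if a sequence $\Omega_k$ with $A_k \ge A_0$ satisfied $P_o(\Omega_k)\to P_o(B)$, then after extraction of an $L^1_{\mathrm{loc}}$ limit (allowing for a portion $\alpha$ of mass to escape to infinity) and a direct comparison using the strict monotonicity of $g$, one derives a contradiction with the equality case of Theorem \ref{th:main2}.

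The main technical obstacle is establishing the strict monotonicity $\psi'(r)>0$ on the relevant range of radii. This reduces to an ODE-type inequality in $v,v',v'',v'''$, and it is here that the full strength of the harmonic condition is exploited (via the Jacobi equation for the volume density along geodesics through $o$). A subsidiary difficulty is the concentration-compactness step for $A\ge A_0$, where one must rule out loss of mass at infinity; this hinges on the behavior of $g(r)^2$ as $r\to\infty$ and on the strict increase of $g$ in the non-compact harmonic setting, ensuring that a limiting configuration with a mass atom at infinity strictly exceeds $P_o(B)$.
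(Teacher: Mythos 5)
Your core argument for the small-asymmetry regime is essentially the paper's: the calibration field $a(r)^2\partial_r$ with divergence $\psi=G=a(1+a')$, the bathtub reduction of $\Omega\setminus B$ and $B\setminus\Omega$ to two annuli adjacent to $\partial B$, and a second-order expansion of the resulting function of the volume deficit. However, the monotonicity of $\psi$, which you flag as ``the main technical obstacle'' and leave open, is a genuine gap: it is needed both for the bathtub step and, in the stronger form that $g''$ is positive and bounded away from $0$ near $s=|\Omega|$, for the quadratic lower bound, and your proof does not close without it. In the paper it is a short computation: from $a'=1-(m-1)ha$ one gets $G'=2a'^2-(m-1)h'a^2>0$, because $a'>0$ (Proposition \ref{prop-function_a}) and because the mean curvature $h(r)$ of geodesic spheres is non-increasing on a non-compact harmonic manifold (Proposition \ref{prop-harm_mfd_properties}, relying on Ranjan--Shah). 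You should invoke these facts explicitly rather than defer them to an unspecified ODE inequality.

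For $|\Omega\setminus B|\ge A_0$ you propose concentration-compactness with possible loss of mass at infinity. This route can probably be completed, but it is much heavier than necessary and has its own pitfalls: the $L^1_{\mathrm{loc}}$ limit need not be a domain in the sense of Theorem \ref{th:main2}, so its equality case does not apply directly, and you would have to argue instead via lower semicontinuity of the volume-integral deficit $\int_{\Omega\setminus B}(G-G(R))+\int_{B\setminus\Omega}(G(R)-G)$. The paper's Lemma \ref{continuity} replaces all of this by a direct estimate: for any $\rho>R$,
$$
P_o(\Omega)-P_o(B)\ \ge\ \int_{\Omega\setminus B_\rho}\bigl(G(r)-G(R)\bigr)\,dv_M\ \ge\ \bigl(G(\rho)-G(R)\bigr)\bigl(|\Omega\setminus B|-|B_\rho\setminus B|\bigr),
$$
which gives a uniform positive lower bound on the deficit whenever $|\Omega\setminus B|\ge A_0$, after which $|\Omega\setminus B|\le|\Omega|$ yields the quadratic bound. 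No extraction of limits is required, and the delicate discussion of mass escaping to infinity disappears.
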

Notice that the constant $C$ depends on $\Omega$ only via its volume, so that the inequality holds true with a fixed constant on the class of domains of given volume.

Eventually, we investigate the compact setting. In this case we  show that the Brock-Weinstock inequality does not hold, at least in its full generality. Precisely we prove the following result:
\begin{theorem}\label{th:main4}
	Let $M=\mathbb S^m$. There exists  $R>\frac{\pi}{2}$ such that $\sigma_1(\Omega_R)>\sigma_1(B_S)$, where $\Omega_R$ is the intersection of two geodesic balls of radius $R$ with antipodal centers, and $B_S$ is a geodesic ball such that $|\Omega_R|=|B_S|$.
\end{theorem}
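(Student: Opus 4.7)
My plan is to compute $\sigma_1(\Omega_R)$ and $\sigma_1(B_S)$ explicitly by separation of variables and compare them at a well-chosen value of $R$. In geodesic polar coordinates $(\theta,\omega)\in[0,\pi]\times\S^{m-1}$ based at one of the two antipodal ball centers, $\Omega_R=\{\pi-R\le\theta\le R\}$. The symmetry group $O(m)\times\mathbb{Z}_2$ of rotations fixing the polar axis together with the reflection $\theta\leftrightarrow\pi-\theta$ forces every Steklov eigenfunction on $\Omega_R$ to decompose as $u(\theta,\omega)=f(\theta)Y_k(\omega)$, with $Y_k$ a spherical harmonic of degree $k$ on $\S^{m-1}$ and the radial profile $f$ satisfying
\[(\sin^{m-1}\theta\,f')'=k(k+m-2)\sin^{m-3}\theta\,f\quad\text{on }(\pi-R,R),\]
together with the Steklov conditions $f'(R)=\sigma f(R)$ and $-f'(\pi-R)=\sigma f(\pi-R)$, and each mode further splits into a $\theta$-symmetric and a $\theta$-antisymmetric part. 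The same ODE governs $B_S$ with $f$ required to be regular at $\theta=0$.

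Two modes admit elementary closed-form solutions in every dimension $m\ge 2$. The $k=1$ symmetric mode is solved by $f(\theta)=\sin^{1-m}\theta$, giving
\[\sigma^{(1)}(R)=-(m-1)\cot R,\]
while the $k=0$ antisymmetric mode is solved by $f(\theta)=\int_{\pi/2}^\theta\sin^{1-m}s\,ds$, giving
\[\sigma^{(0)}(R)=\frac{1}{\sin^{m-1}R\int_{\pi/2}^R\sin^{1-m}s\,ds}.\]
For the ball, variation of parameters starting from the singular solution $\sin^{1-m}\theta$ produces the regular solution $f(\theta)=\sin^{1-m}\theta\int_0^\theta\sin^{m-1}s\,ds$, and hence
\[\sigma_1(B_S)=\frac{\sin^{m-1}r}{\int_0^r\sin^{m-1}s\,ds}-(m-1)\cot r.\]
A routine check of the remaining modes (the other $k=0,1$ parities give strictly larger eigenvalues, and higher $k$ even larger) shows that $\sigma_1(\Omega_R)=\min\bigl(\sigma^{(0)}(R),\sigma^{(1)}(R)\bigr)$.

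To finish, I would pick $R=R^*\in(\pi/2,\pi)$ to be the unique value at which $\sigma^{(0)}(R^*)=\sigma^{(1)}(R^*)$, equivalently the unique minimum of $\sigma^{(0)}$---both characterizations coincide with the transcendental equation
\[(m-1)(-\cos R^*)\sin^{m-2}R^*\int_{\pi/2}^{R^*}\sin^{1-m}s\,ds=1.\]
Since $\sigma^{(1)}$ is increasing on $(\pi/2,\pi)$ while $\sigma^{(0)}$ decreases-then-increases, this choice maximizes $\sigma_1(\Omega_R)$ and gives $\sigma_1(\Omega_{R^*})=-(m-1)\cot R^*$. The corresponding ball radius $r^*$ is fixed by the volume balance $\int_0^{r^*}\sin^{m-1}s\,ds=2\int_{\pi/2}^{R^*}\sin^{m-1}s\,ds$, and a direct computation then yields $\sigma_1(\Omega_{R^*})>\sigma_1(B_{r^*})$: for $m=2$ the inequality reduces to a single-variable transcendental statement in $c=-\cos R^*$ subject to $c\log\frac{1+c}{1-c}=2$; for $m=3$ one obtains the clean value $R^*=3\pi/4$ with $\sigma_1(\Omega_{R^*})=2$ against $\sigma_1(B_{r^*})\approx 1.79$; and the same scheme applies in every $m$.

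The main obstacle is verifying the final strict inequality in closed form for arbitrary $m$: both defining equations are transcendental, and the target inequality mixes $(m-1)$-cotangents with moments of $\sin^{m-1}$. The cleanest route is to use the defining identity above to eliminate the integral from $\sigma^{(0)}(R^*)$ and recast the target as a purely one-parameter transcendental inequality, which can then be controlled by monotonicity arguments or, if necessary, by asymptotic analysis as $R\to\pi^-$ or as $m\to\infty$.
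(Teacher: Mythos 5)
Your mode decomposition, the closed forms $\sigma^{(1)}(R)=-(m-1)\cot R$, $\sigma^{(0)}(R)=\bigl(\sin^{m-1}R\int_{\pi/2}^{R}\sin^{1-m}s\,ds\bigr)^{-1}$ and $\sigma_1(B_S)$, and the observation that the crossing $\sigma^{(0)}(R^*)=\sigma^{(1)}(R^*)$ coincides with the critical point of $\sigma^{(0)}$ are all correct, and your route differs from the paper's in an essential way. The paper builds the Steklov spectrum of $\Omega_R$ from the functions $a_k(r^+)f_k\pm a_k(r^-)f_k$ and lists the eigenvalues as $C_k^{\pm}(R)$; since $a_0\equiv\mathrm{const}$, the antisymmetric $k=0$ combination vanishes identically, so that list misses precisely your mode $\sigma^{(0)}$. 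The paper then takes $\sigma_1(\Omega_R)=-(m-1)\cot R$ for \emph{all} $R\in(\pi/2,\pi)$ and concludes by letting $R\to\pi$, where $\sigma_1(\Omega_R)/\sigma_1(B_S)\to 2^{1/m}$. Your extra mode shows that this regime is treacherous: as $R\to\pi$ one has $\sigma^{(0)}(R)\sim (m-2)/(\pi-R)$ for $m\ge 3$ (and $\sim 1/\bigl((\pi-R)\log\tfrac{1}{\pi-R}\bigr)$ for $m=2$), which lies below $-(m-1)\cot R\sim(m-1)/(\pi-R)$ and also below $\sigma_1(B_S)\sim (m-1)/\bigl(2^{1/m}(\pi-R)\bigr)$, because $(m-2)2^{1/m}<m-1$ for every $m\ge 2$. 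So near $\pi$ the strip loses to the ball, and your decision to work at the interior crossing point $R^*$ is not a stylistic preference but a necessity.

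That is also where the genuine gap sits. The final strict inequality $\sigma_1(\Omega_{R^*})>\sigma_1(B_{r^*})$ is verified only numerically for $m=3$ (and merely set up for $m=2$); for general $m$ you have a program, not a proof, and of the two fallbacks you propose, the asymptotic analysis as $R\to\pi^-$ is exactly the regime where the desired inequality \emph{fails}, so it cannot close the argument. You need a genuine estimate at $R^*$ (or at some other explicit $R$ in the transition region) valid for every $m$ --- for instance lower bounds on $-\cot R^*$ together with upper bounds on the volume defect $\int_0^{\pi-R^*}\sin^{m-1}s\,ds$ feeding into the formula for $\sigma_1(B_{r^*})$ --- and this step is simply not done. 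A second, smaller debt is the ``routine check of the remaining modes'': the inequalities $C_k^-\ge C_k^+$ and $C_k^+\ge C_1^+$ for $k\ge 2$, which are what legitimate $\sigma_1(\Omega_R)=\min\bigl(\sigma^{(0)}(R),\sigma^{(1)}(R)\bigr)$, require either the nonnegativity of all Taylor coefficients of the radial profiles $a_k$ or an integration of the radial ODE (both arguments appear in the paper); they should be carried out rather than waved through.
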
	

%
\subsection*{Plan of the paper}
In Section \ref{sec:background} we settle down the notations used throughout the paper and we properly introduce the Steklov spectrum on a manifold, as well as a variational characterization of its first eigenvalue. Thereafter we recall the main geometric features of harmonic manifolds and ROSS.

Section \ref{sec:spectruminross} is devoted to the study the Steklov spectrum of geodesic balls in ROSS. In particular, for non-compact ROSS, we determine the  eigenfunctions corresponding to the first eigenvalue of the ball which, in the Euclidean setting, are the coordinate functions. The precise knowledge of these eigenfunctions is a crucial point for our proofs to work.

Section \ref{sec:steklov} contains the proof of Theorems \ref{th:main1}, \ref{th:main2}, \ref{th:main3}, while the proof of Theorem \ref{th:main4} is postponed to Section \ref{sec:nottheball}.
%
%
%
%
%
%
\section{Background and notations}\label{sec:background}
	
%
\subsection{The Steklov spectrum of a domain}\label{section-Steklov}
Let $M$ be a Riemannian manifold of dimension $n$ and let $\Omega\subset M$ be an connected open bounded set with Lipschitz boundary. The Steklov eigenvalue problem,  introduced by the Russian mathematician V. A. Steklov \cite{Kuznetsov-others}, consists in finding a solution $u$ of the boundary value problem
\[
\begin{cases}
-\Delta u=0&\text{in $\Omega$}\\
\partial_\nu u=\sigma u&\text{on $\partial\Omega$},
\end{cases}
\] 
where $\Delta$ is the Laplace-Beltrami operator on $\Omega$ and $\partial_\nu u$ is the partial derivative of $u$ on  $\partial\Omega$.

Equivalently, the problem reduces to study the spectrum of the operator Dirichlet-to-Neumann $\mathcal R:L^2(\partial \Omega)\to L^2(\partial \Omega)$ which maps $f\in L^2(\partial\Omega)$ to the normal derivative on the boundary of the harmonic extension of $f$ inside of $\Omega$. 
This operator is symmetric and positive. Moreover, thanks to the compactness of the embedding of the trace operator from $H^1(\Omega)$ into $L^2(\partial\Omega)$, the resolvent of $\mathcal R$ is compact. Thus $L^2(\partial\Omega)$ admits an Hilbert basis $\{u_k\}_{k\in\mathbb N}$ of eigenfunctions for $\mathcal R$, and of positive eigenvectors $0=\sigma_0(\Omega)<\sigma_1(\Omega)\le \sigma_2(\Omega)\le\dots$ diverging to $+\infty$ such that
\[
\begin{cases}
-\Delta u_k=0&\text{in $\Omega$}\\
\partial_\nu u_k=\sigma_k(\Omega)u_k&\text{on $\partial\Omega$}.
\end{cases}
\]
For ease of presentation, for  $u\in H^1(\Omega)$ we still denote  by $u$ its trace in $L^2(\partial\Omega)$. Here solutions are intended in the weak$-H^1(\Omega)$ sense, that is
\[
\int_{\Omega}\langle\nabla u_k,\nabla\varphi\rangle\,dv_M=\sigma_k(\Omega)\int_{\partial\Omega}u_k\varphi\,dv_{\partial\Omega}\qquad\text{for every $\varphi\in H^1(\Omega)$}.
\]
The eigenfunctions $u_k$ and the eigenvalues $\sigma_k(\Omega)$ are respectively the  Steklov eigenfunctions and eigenvalues of $\Omega$.
In this paper we are mostly interested in the first non-zero Steklov eigenvalue, $\sigma_1(\Omega)$ which  can be characterized variationally as 
\begin{equation}\label{steklovvariational}
\sigma_1(\Omega)=\min\left\{\frac{\displaystyle\int_{\Omega}|\nabla u|^2\,dv_M }{\displaystyle\int_{\partial\Omega}u^2\,dv_{\partial\Omega}}:u\in H^1(\Omega),\,\int_{\partial\Omega}u\,dv_{\partial\Omega}=0\right\}.
\end{equation}

%
\subsection{Harmonic manifolds and rank one symmetric spaces}\label{subsecharm}
Although our main result only concern the ROSS, some of the lemmas or intermediate results hold true for the larger class of \emph{harmonic manifolds}.

Harmonic manifolds are those Riemannian manifolds whose harmonic functions have the mean value property. Equivalently, a Riemannian manifold $M$ is harmonic if and only if there exists a function $h:\R_+^*\to\R$ such that any sphere of radius $r$ has constant mean curvature $h(r)$. Another equivalent property is that there exists a function $\theta:\R_+\to\R$ such that at any point $x\in M$ the volume form in normal coordinates reads $dv_M=\theta(r)drd\xi$ where $d\xi$ is the canonical volume form of the unit tangent sphere $U_xM$ at $x$ (the function $\theta$ is usually called the volume density function of $M$). It is not difficult to show that harmonic spaces are Einstein manifolds and that a Riemannian manifold is harmonic if and only if its universal cover is harmonic. Therefore, in what follows we will only consider simply connected harmonic manifolds. General properties of harmonic manifolds can be found in \cite[chapter 6]{Besse}, \cite[sections 1 and 2]{Szabo} and \cite[section 2]{Knieper-Peyerimhoff}.

The basic examples of harmonic manifolds are the Euclidean spaces and the Rank One Symmetric Spaces (ROSS, see Section \ref{sec:ROSS} for the definition and main properties). In 1944, A. Lichn\'erowicz conjectured, and proved in dimension 4, that the Euclidean spaces and ROSS are the only harmonic manifolds. The conjecture was then proved  by Z.I. Szabo for compact simply connected manifolds (cf. \cite{Szabo}). However, the Lichn\'erowicz conjecture was proved to be false in the non-compact case: E. Damek and F. Ricci constructed harmonic homogeneous manifolds which are not ROSS (cf. \cite{Damek-Ricci}). Up to now, the Euclidean spaces, ROSS and Damek-Ricci spaces are the only known harmonic manifolds.

The main properties of harmonic manifolds we will use in our proofs are summarized in the following proposition:
\begin{proposition}\label{prop-harm_mfd_properties}
	Let $M^m$ be a non-compact harmonic manifold and note $\theta(r)$ its density function and $h(r)$ the mean curvature of spheres of radius $r$. The following holds:
	\begin{enumerate}
		\item[$(i)$] $(m-1)h(r)=\frac{\theta'(r)}{\theta(r)}$.
		\item[$(ii)$] $h(r)$ decreases to a nonnegative constant $h_0$.
		\item[$(iii)$] $(m-1)h_0$ is the volume entropy of $M$.
		\item[$(iv)$] $h_0=0$ if and only if $M$ is the euclidean space.
	\end{enumerate}
\end{proposition}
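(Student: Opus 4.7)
The plan is to unravel the four items essentially from the trace of the Riccati equation for the shape operator of geodesic spheres, with (iii) and (iv) following from (ii) by asymptotic analysis.

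For (i), I work in geodesic normal coordinates $(r,\xi)$ centered at a point $x\in M$. Writing $A(r)$ for the matrix of a basis of orthogonal Jacobi fields along the radial geodesic $\gamma$ vanishing at $0$, one has $\theta(r)=\det A(r)$, while the shape operator of the sphere $S_r(x)$ with respect to the outer unit normal $\partial_r$ is $S(r)=A'(r)A(r)^{-1}$. Taking the trace and using Jacobi's formula yields
\[
(m-1)h(r)=\mathrm{tr}\,S(r)=(\log\det A)'(r)=\frac{\theta'(r)}{\theta(r)},
\]
which is exactly (i).

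For (ii), I use the Riccati equation $S'+S^2+R_{\dot\gamma}=0$ along each radial geodesic, where $R_{\dot\gamma}(\cdot)=R(\cdot,\dot\gamma)\dot\gamma$. Harmonic manifolds are Einstein, so $\mathrm{Ric}=(m-1)Kg$ for some constant $K$, and taking the trace gives
\[
(m-1)h'(r)+\mathrm{tr}(S(r)^2)+(m-1)K=0.
\]
Since $h$ depends only on $r$ (harmonicity), so does $\mathrm{tr}(S^2)$, and Cauchy--Schwarz yields $\mathrm{tr}(S^2)\ge(m-1)h(r)^2$, hence $h'\le -h^2-K$. For non-compact harmonic manifolds there are no conjugate points, so $S(r)$ is defined for all $r>0$; combining this with the Riccati inequality gives monotonicity of $h$ and convergence to a nonnegative limit $h_0\ge 0$, which is also the common mean curvature of horospheres (this is the step where I would lean on the comparison argument: if $h$ were to dip below $\sqrt{\max(-K,0)}$, the Riccati inequality would force $h'>0$, contradicting monotone decrease).

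For (iii), I start from $v'(r)=\omega_{m-1}\theta(r)$ and note $\theta'(r)/\theta(r)=(m-1)h(r)\to(m-1)h_0$. A first application of L'H\^opital gives $\lim_{r\to\infty}r^{-1}\log\theta(r)=(m-1)h_0$, and a second application shows the same limit holds for $r^{-1}\log v(r)$, identifying $(m-1)h_0$ with the volume entropy of $M$.

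For (iv), the ``if'' direction is immediate ($h(r)=1/r\to 0$ in $\R^m$). For the converse, if $h_0=0$ then the comparison argument above forces $K\le 0$ and in fact $K=0$, so $M$ is Ricci-flat. Plugging $K=0$ back into the traced Riccati equation and exploiting $h_0=0$ turns the Cauchy--Schwarz inequality into an equality, whence $S(r)=h(r)\,\mathrm{Id}$; then $h'=-h^2$ integrates (with the boundary behaviour $h(r)\sim 1/r$ as $r\to 0^+$) to $h(r)=1/r$, so $\theta(r)=c\,r^{m-1}$. A harmonic manifold with Euclidean volume density is flat, because the equality $S(r)=r^{-1}\mathrm{Id}$ together with the (now equality) Riccati equation forces $R_{\dot\gamma}\equiv 0$ along every radial geodesic, and by homogeneity of harmonic manifolds this propagates to $R\equiv 0$ on all of $M$. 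Hence $M=\R^m$. The main obstacle I expect is precisely this last implication ``$h_0=0 \Rightarrow$ flat'': squeezing flatness out of the trace Riccati equation requires carefully exploiting the equality case of Cauchy--Schwarz together with the local homogeneity of a harmonic manifold, and I would likely need to cite the classical treatment in Besse or Szab\'o to package this cleanly.
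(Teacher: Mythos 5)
Your items (i) and (iii) are fine and essentially coincide with the paper's treatment, which calls (i) classical and derives (iii) from $v''(r)=(m-1)h(r)v'(r)$ together with $h(r)\to h_0$. The genuine problems are in (ii) and (iv). These are precisely the two points the paper does \emph{not} prove but delegates to the literature (Ranjan--Shah for the monotonicity of $h$; Nikolayevsky and Ranjan--Shah for the rigidity $h_0=0\Rightarrow M=\R^m$), and your sketches do not close the gaps. For (ii): the traced Riccati equation plus Cauchy--Schwarz gives only $h'\le -h^2-K$. When $K<0$ (the generic non-compact case, e.g.\ $h=\coth$ on $\RH^m$ with $K=-1$), the right-hand side is positive wherever $h<\sqrt{-K}$, so the inequality is vacuous there and yields nothing about the sign of $h'$. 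Your parenthetical fix is circular: you rule out $h$ dipping below $\sqrt{-K}$ by ``contradicting monotone decrease,'' which is exactly the property being proved; moreover an upper bound $h'\le(\text{positive quantity})$ does not ``force $h'>0$.'' The actual argument compares the full matrix Riccati solutions along a common normal geodesic for two internally tangent geodesic spheres of radii $t$ and $t+s$ (using no conjugate points for global existence) and then uses harmonicity to say that the resulting pointwise inequality on mean curvatures is an inequality between $h(t+s)$ and $h(t)$; the scalar trace inequality alone is not enough.

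For (iv), two steps are unjustified. First, ``$h_0=0\Rightarrow K=0$'': from $h'\le-h^2-K$ and $h\to 0$ you can exclude $K>0$, but not $K<0$, since the bound is vacuous once $h$ is small. Second, ``$h_0=0$ turns Cauchy--Schwarz into an equality'': with $K=0$ the inequality $h'\le-h^2$ together with $h(r)\sim 1/r$ at the origin only gives $h(r)\le 1/r$ (set $u=1/h$ and note $u'\ge1$), which is perfectly compatible with strict inequality and with $h\to0$; nothing forces $S(r)=h(r)\,\mathrm{Id}$. The statement that a non-compact harmonic manifold with $h_0=0$ (equivalently: zero volume entropy, or minimal horospheres) is flat is a substantial theorem whose known proofs go through Busemann functions, the Bochner formula and more; you correctly sense this at the end, and in fact the honest resolution --- the one the paper adopts --- is to cite Nikolayevsky's Theorem~2 and Ranjan--Shah's Theorem~4.2 rather than to rederive it from the trace of the Riccati equation.
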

\begin{proof}
	The first point is a classical fact of Riemannian geometry.
	
	The second point is proved in \cite{Ranjan-Shah-2}. In particular, the non-compact harmonic manifolds have no conjugate points, so that they have well defined horospheres and Busemann functions, and their horospheres have constant mean curvature $h_0$.
	
	The volume entropy of $M$ is $\lim_{r\to+\infty}\frac{\ln(v(r))}{r}$, where $v(r)$ is the volume of a ball of radius $r$. The manifold $M$ being harmonic, the classical derivation formula gives $v''(r)=(m-1)h(r)v'(r)$ and the third point follows from $\lim_{r\to+\infty}h(r)=h_0$.
	
	The last point is a consequence of results by Y. Nikolayevsky \cite[Theorem 2]{Nikolayevsky} and A.~Ranjan and H. Shah \cite[Theorem 4.2]{Ranjan-Shah}
\end{proof}

%
\subsection{The geometry of ROSS}\label{sec:ROSS}
In this section we give some geometric properties of ROSS which will be used later. In particular, we describe the extrinsic and intrinsic geometry of their geodesic spheres. There are two families of simply connected ROSS, the compact ones (the round sphere $\S^n$ and the projective spaces $\CP^n$, $\HP^n$ and $\CaP$) and the non-compact ones (the hyperbolic spaces $\RH^n$, $\CH^n$, $\HH^n$ and $\CaH$). We outline here the definition of ROSS and refer to \cite[Chapter 3]{Besse} for the details of the construction.

In what follows, $\K$ will denote one of the following: the field $\R$ of real numbers, the field $\C$  of complex numbers, the algebra $\H$ of quaternions or or the algebra $\Ca$ of  octonions. Let $d=\dim_\R(\K)$, and consider $\K^{n+1}$ equipped with the Hermitian product
$$
\langle x,y \rangle = \sum_{i=0}^n x_i\bar{y_i}
$$
whose real part $\langle.,.\rangle_\R$ is the standard Euclidean inner product on $\K^{n+1}$ viewed as an $dn+d$ real vector space. If $\K\not=\Ca$, the group $U(1,\K)$ of unit elements in $\K$ acts on the unit sphere $\S^{nd+d-1}$ of $\K^{n+1}$ by right multiplication, and the projective space $\KP^n$ is $\S^{nd+d-1}/U(1,\K)$. Therefore it is the base space of the fibration
$$
\S^{d-1} \to \S^{nd+d-1} \to \KP^n
$$
and, considering the standard metric on $\S^{nd+d-1}$, there is a unique metric on $\KP^n$ which makes this fibration a Riemannian submersion with totally geodesic fibers (cf. \cite[section 2.A.5]{Gallot-Hulin-Lafontaine}). In case $\K=\Ca$, such a construction only works if $n=2$ (cf. \cite[section 3.G]{Besse})

The non-compact ROSS are defined in a similar way, replacing the hermitian product on $\K^{n+1}$ by
$$
\langle x,y \rangle = -x_0\bar{y_0} + \sum_{i=1}^n x_i\bar{y_i}
$$
whose real part $\langle.,.\rangle_\R$ is a real bilinear form with signature $(d,nd)$ on $\K^{n+1}$, and replacing the sphere $\S^{nd+d-1}$ by $H^{nd+d-1}=\{x\in\K^{n+1}\ |\ \langle x,x \rangle = -1 \}$. We still have the $U(1,\K)$ action on $H^{nd+d-1}$ and the hyperbolic space $\KH^n$ is $H^{nd+d-1}/U(1,\K)$. It is the base space of the fibration
$$
\S^{d-1} \to H^{nd+d-1} \to \KH^n.
$$
The metric induced by  $\langle.,.\rangle_\R$ on $H^{nd+d-1}$ has signature $(d-1,nd)$, and since it is preserved by the action of $U(1,\K)$ whose orbits are $d-1$ dimensional spheres, its restriction to the orthogonal of the fiber is positive definite. Therefore the fibration induces a Riemannian metric on $\KH^n$ (cf. \cite[Chapter XI, example 10.7]{Kobayashi-Nomizu-II} for such a construction of the complex hyperbolic space).

Let $M^m$ be a ROSS of dimension $m=dn$, with $d=\dim_\R(\K)$. 
For a common treatment of the compact and non-compact cases we will use the following notations :
$$
\si(t)=\left\{\begin{array}{ll}
						\sin(t) & \mbox{if } M \mbox{ is a compact ROSS} \\
						\sinh(t) & \mbox{if } M \mbox{ is a non-compact ROSS} \\
					\end{array}\right.
$$
and
$$
\co(t)=\left\{\begin{array}{ll}
						\cos(t) & \mbox{if } M \mbox{ is a compact ROSS} \\
						\cosh(t) & \mbox{if } M \mbox{ is a non-compact ROSS} \\
					\end{array}\right.
$$

We will use the framework of Jacobi tensor to describe the second fundamental form of spheres and the density function. Let $\gamma$ be a geodesic with initial point $x=\gamma(0)$ and initial speed $\xi=\dot{\gamma}(0)\in U_xM$, and note $N\gamma$ its normal bundle, that is the disjoint union of the $N_t\gamma=\{\eta\in T_{\gamma(t)}M\ |\ \langle \eta,\dot{\gamma}(t)\rangle=0\}$. A $(1,1)-$tensor $A$ along $\gamma$ is a differentiable section of $\End(N\gamma)$, whose derivative is defined by $A'X = D_{\dot{\gamma}}(AX) - AD_{\dot{\gamma}}X$, for any normal vector field $X$ along $\gamma$.

In particular, the curvature tensor $R$ of $M$ induces a $(1,1)-$tensor $R(t)$ along $\gamma$ defined by $R(t)\eta = R(\dot{\gamma}(t),\eta)\dot{\gamma}(t)$, and a Jacobi tensor is a solution of the equation $A''(t)+R(t)A(t)=0$. In the sequel, we will note $A_\xi$ the Jacobi tensor with initial conditions $A_\xi(0)=0$ and $A'_\xi(0)=I$.

If $X$ is a parallel vector field along gamma with $X(0)=\eta$ orthogonal to $\xi$, then $Y=A_\xi X$ is the Jacobi field along $\gamma$ with $Y(0)=0$ and $Y'(0)=\eta$. As a consequence we have that the density function is given by $\theta(\xi,t)=\det(A_\xi(t))$, and the second fundamental form of the sphere $S_t(x)$ at $\exp_x(t\xi)$ is given
by $A'_\xi(t)A^{-1}_\xi(t)$ (cf. for example \cite[section 3.H]{Gallot-Hulin-Lafontaine} for the computation of the density from Jacobi fields).

As a consequence of the definition of $M$, it carries $d-1$ orthogonal complex structures $J_1,\dots,J_{d-1}$ (cf. \cite[Chapter 3]{Besse}). The curvature tensor of $M$ is described, using these complex structures, in the following way: for any $\xi,\eta\in U_xM$ with $\eta$ orthogonal to $\xi,J_1\xi,\dots,J_{d-1}\xi$ we have
$$
R(\xi,J_i\xi)\xi = 4\varepsilon J_i\xi\ \ \ \mbox{ and }\ \ \ 
R(\xi,\eta)\xi = \varepsilon \eta.
$$
where we choose $\varepsilon\in\{-1,1\}$ being $1$ in the compact case and $-1$ in the non-compact case. For a geodesic $\gamma$ with initial point
$x=\gamma(0)$ and initial speed $\xi=\dot{\gamma}(0)\in U_xM$,
consider an orthonormal parallel frame $(E_1,\dots,E_m)$ such that
$E_1=\dot{\gamma}$ and, for $i=1,\dots,d-1$, $E_{i+1}=J_i\dot{\gamma}$.
For each $t$, the eigenspaces of $R(t)$ are spanned by $E_2(t),\dots,E_d(t)$
with eigenvalue $4\varepsilon$ and by $E_{d+1}(t),\dots,E_m(t)$ with eigenvalue $\varepsilon$.
Therefore, integrating the Jacobi equation, we have that $A_\xi(t)$ has the same
eigenspaces with eigenvalues
$$
\alpha(t)=\frac{1}{2}\si(2t)=s(t)c(t)\ \ \ \mbox{ and }\ \ \ 
\beta(t)=\si(t),
$$
with respective multiplicities $d-1$ and $m-d$. {From this computation of Jacobi tensors, we have that the non-compact ROSS have no conjugate point, while the compact ones have a first conjugate point at distance $t=\pi$ if $M=\S^m$ and $t=\frac{\pi}{2}$ otherwise. In particular, for compact ROSS, the injectivity domain of the exponential map is the ball of radius $\diam(M)$ in $U_xM$. In the following, we assume that $t<\diam(M)$.}

As a first consequence of the above computation, the density function of $M$ is given by
$$
\theta(t)=\frac{1}{2^{d-1}}\si(2t)^{d-1}\si(t)^{m-d}=c(t)^{d-1}s(t)^{m-1}.
$$
Moreover, the second fundamental form of the geodesic sphere $S_x(t)$ has two eigenvalues,
$2\frac{\co(2t)}{\si(2t)}$ and $\frac{\co(t)}{\si(t)}$, the first one being of multiplicity $d-1$ with an eigenspace spanned by $J_1\dd{}{r},\dots,J_{d-1}\dd{}{r}$, where $\dd{}{r}$ is the radial field centered at $x$. From these computations we get that the mean curvature of the geodesic spheres of radius $r$ satisfies
\begin{equation}\label{eqn-mean_curvature}
	h(t)= \frac{c(t)}{s(t)} - \varepsilon\frac{d-1}{m-1}\frac{s(t)}{c(t)}.
\end{equation}

\begin{remark}
	If $M$ is the sphere $\S^m$ or the real hyperbolic space $\RH^m$ then only the second eigenvalue appears and the geodesic spheres of $M$ are totally umbilical.
	
	For a general ROSS, two important properties will be used later: the eigenvalues of the second fundamental form only depend on the radius of the geodesic sphere and there exists a parallel orthonormal frame of eigenvectors along the geodesic $\gamma$.
	
	These two properties are not satisfied in general Damek-Ricci spaces which are not two-point homogeneous. Moreover, their curvature tensor $R(t)$ do not have parallel eigenvectors nor constant eigenvalues along the geodesic $\gamma$ (cf. \cite[section 4.3]{Berndt-Ticerri-Vanhecke}).
\end{remark}

From the previous computations we can derive the induced metric of geodesic spheres. Let $o$ be some fixed point in $M$, and $g_c$ the canonical metric of the unit sphere $U_oM$ in $T_oM$. For each $r>0$, consider the exponential map
$$
\left\{\begin{array}{rcl}
U_oM & \to & S_r(o) \\
\xi & \mapsto & \exp_o(r\xi)
\end{array}\right.
$$
and denote by $g_r$ the pullback on the unit sphere $U_oM$ of the metric of the geodesic sphere $S_r(o)$. 

Using the complex structures $J_1,\dots,J_{d-1}$ on $T_oM$, consider the $d-1$ unit vector
fields on $U_oM$ defined at $\xi\in U_oM$ by $J_1\xi,\dots,J_{d-1}\xi$, and their dual 1-forms $\delta_k$. As the differential of the exponential map is given by Jacobi tensor, from the computations above we have that the metric $g_r$ is given by
$$
g_r=\si^2(r)\left(g_c-\sum_{k=1}^{d-1}\delta_k\otimes\delta_k+\co^2(r)\sum_{k=1}^{d-1}\delta_k\otimes\delta_k\right)
$$
\begin{remark}
	If $M$ is the sphere $\S^m$ or the real hyperbolic space $\RH^m$ then $g_r=s^2(r)g_c$ and the geodesic sphere $S_o(r)$ is a round sphere of curvature $\frac{1}{s^2(r)}$.
	
	For the other ROSS, the metrics $g_r$ are known as Berger's metric and were widely studied, in particular for their spectral properties.
\end{remark}

%
%
%
%
\section{The Steklov spectrum of geodesic balls in ROSS}\label{sec:spectruminross}

In this section we introduce the main definitions and some preliminary results which will be exploited in Section \ref{sec:steklov}.

Let $M$ be a ROSS and $o$ be some fixed point in $M$. For any $x\in M$, let
$r(x)=d(o,x)$ and  (assuming that $r(x)<\diam(M)$ if $M$ is a compact ROSS) let $w(x)\in U_oM$
be the unique unit vector at $o$ such that $x=\exp_o(r(x)w(x))$.

We will note $\Delta_r$ and $\Delta_c$ the Laplace operators of the metrics $g_r$ and $g_c$
respectively.

%
\subsection{The Laplace spectrum of geodesic spheres}

Up to the factor $\si^2(r)$, the metric $g_r$ is a Berger metric on the sphere for which the spectrum
of the Laplacian is known. The key point in describing the spectrum is that the sphere $U_oM$
is the total space of a Riemannian submersion and that the metric $g_r$ is obtained from the
canonical metric $g_c$ by a rescaling of the fibers. This setting was considered in the special
case of odd dimensional spheres in \cite{Tanno1,Tanno2} and in the general case of a
Riemannian submersion in \cite{Berard-Bergery-Bourguignon}.
\begin{theorem}
	The Hilbert space $L^2(U_oM)$ admits a Hilbert basis which consists of eigenfunctions of $\Delta_c$
	and of each $\Delta_r$.
\end{theorem}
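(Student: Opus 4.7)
The plan is to recognize the family $g_r$ as canonical variations of $g_c$ along a Riemannian submersion with totally geodesic fibers, and then to simultaneously diagonalize the associated horizontal and vertical Laplacians. When $\K\in\{\R,\C,\H\}$ (resp.\ $\K=\Ca$ with $n=2$), the unit tangent sphere $U_oM$ is the total space of the Hopf-type fibration $\S^{d-1}\to U_oM\to\KP^{n-1}$ (resp.\ $\S^7\to\S^{15}\to\S^8$) with totally geodesic round fibers, whose vertical distribution at $\xi$ is spanned by $J_1\xi,\dots,J_{d-1}\xi$. Writing $g_c=g_{\mathcal H}+g_{\mathcal V}$ for the induced horizontal--vertical splitting, the formula for $g_r$ recalled just above reads
\[
g_r=\si^2(r)\bigl(g_{\mathcal H}+\co^2(r)\,g_{\mathcal V}\bigr),
\]
so that, up to the global conformal factor $\si^2(r)$, $g_r$ is the canonical variation of $g_c$ with parameter $\co(r)$ along this fibration.

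Since a global rescaling of the metric only multiplies the Laplacian by a constant and does not affect its eigenfunctions, it is enough to work with $\tilde g_r:=g_{\mathcal H}+\co^2(r)g_{\mathcal V}$. Because the fibers are totally geodesic, a standard computation (the general formula for the Laplacian of a canonical variation) yields
\[
\Delta_{\tilde g_r}=\Delta_{\mathcal H}+\frac{1}{\co^2(r)}\,\Delta_{\mathcal V}, \qquad \Delta_c=\Delta_{\mathcal H}+\Delta_{\mathcal V},
\]
where $\Delta_{\mathcal H}$ and $\Delta_{\mathcal V}$ are the horizontal and vertical components of $\Delta_c$. Granted that these two self-adjoint operators commute, each eigenspace of the elliptic operator $\Delta_c$ is finite-dimensional and invariant under both $\Delta_{\mathcal H}$ and $\Delta_{\mathcal V}$, so elementary linear algebra on each eigenspace produces a Hilbert basis of $L^2(U_oM)$ consisting of simultaneous eigenfunctions of $\Delta_{\mathcal H}$ and $\Delta_{\mathcal V}$; any such function is then automatically an eigenfunction of $\Delta_c$ and of every $\Delta_r$, which is the claim of the theorem.

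The crux of the proof, and what I expect to be the main obstacle, is thus the commutation $[\Delta_{\mathcal H},\Delta_{\mathcal V}]=0$. I would deduce it from the homogeneity of the fibration: the isometry group of $M$ acts transitively on $U_oM$, preserves the horizontal--vertical splitting, and commutes with both $\Delta_{\mathcal H}$ and $\Delta_{\mathcal V}$; the fiber group $U(1,\K)$ (or the $\mathrm{Spin}(7)$--equivariant structure of the Hopf fibration $\S^7\to\S^{15}\to\S^8$ in the Cayley case) acts on each fiber by isometries. Applying Peter--Weyl to these commuting actions decomposes $L^2(U_oM)$ into finite-dimensional common invariant subspaces on which $\Delta_{\mathcal V}$ acts as a scalar, forcing $[\Delta_{\mathcal H},\Delta_{\mathcal V}]=0$. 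The only genuinely delicate point is the Cayley case, where the fiber is not a Lie group and one has to argue directly from the transitive action of $\mathrm{Spin}(7)$ on the fibers; in every case the statement is covered by the general framework of \cite{Berard-Bergery-Bourguignon}, to which I would ultimately refer to avoid repeating the case-by-case verification.
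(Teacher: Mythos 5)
Your proposal is correct and follows essentially the same route as the paper: the paper's proof simply observes that multiplying the Berger metric by $\si^2(r)$ rescales the eigenvalues without changing the eigenfunctions and then cites Theorem 3.6 and Corollary 5.5 of \cite{Berard-Bergery-Bourguignon}, which is exactly the canonical-variation framework you invoke. The only remark worth making is that the key commutation $[\Delta_c,\Delta_{\mathcal V}]=0$ is established in that reference by a direct local computation valid for any Riemannian submersion with totally geodesic fibers, so the representation-theoretic detour --- and the delicate case-by-case discussion you flag for $\K=\Ca$, where the fiber is not a group --- is not actually needed.
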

\begin{proof}
	Multiplying the Berger metric by $\si^2(r)$ rescale the eigenvalues of the Laplacian by $\si^{-2}(r)$, but does not modify the eigenfunctions. Therefore the theorem is just a consequence of \cite{Berard-Bergery-Bourguignon} Theorem 3.6 and Corollary 5.5.
\end{proof}

In the sequel we consider a Hilbert basis $(f_k)_{k\in\N}$ of common eigenfunctions and we note
$\lambda_k(r)$ the eigenvalue of $\Delta_r$ associated to $f_k$, and $\lambda_{c,k}$ the
eigenvalue of $\Delta_c$ associated to $f_k$. Moreover, we chose the Hilbert basis in such a way that
the sequence $(\lambda_{c,k})_{k\in\N}$ is the increasing sequence of eigenvalues of the standard
unit sphere.

As the functions $(f_k)_{k\in\N}$ are eigenfunctions of $\Delta_c$, they are given by spherical harmonics
of the tangent space $T_oM$ (i.e. they are the restrictions to the unit sphere $U_oM$ of homogeneous
harmonic polynomials of some Cartesian coordinates in $T_oM$). In particular, we have that
$f_0$ is a constant function, and, for $k=1,\dots,m$, $f_k=\langle \xi_k,.\rangle$ where
$(\xi_1,\dots,\xi_m)$ is an orthonormal basis of $T_oM$.

\begin{remark}
	The ordering of the eigenvalues may change as $r$ varies. In particular, if $M$ is a compact
	ROSS, there exists a parameter $r_0$ such that the eigenvalues of $\Delta_{r_0}$ associated to
	the first and some of the second spherical harmonics coincide, and such that, for $r>r_0$,
	the smallest non-zero eigenvalue of $\Delta_r$ is given by some second spherical harmonics (cf.
	\cite{Tanno1,Tanno2,Berard-Bergery-Bourguignon}). This kind of phenomena was the main
	motivation for studying the spectrum of Berger spheres.
\end{remark}

\begin{remark}
	The fact that the first spherical harmonics give rise to eigenfunctions of $\Delta_r$ also
	holds true on harmonic manifolds (cf. \cite{Knieper-Peyerimhoff, Ranjan-Shah}).
	However, as explained
	in the above remark, it is not true in general that they give rise to the smallest non-zero
	eigenvalue of $\Delta_r$.
\end{remark}

%
\subsection{From the Laplace spectrum of spheres to the Steklov spectrum of balls}

In this section we construct a family of harmonic functions from the eigenfunctions of $\Delta_r$.
These functions can be seen as the ``spherical harmonics'' of $M$ and will give rise to
Steklov eigenfunctions of the geodesic balls.

\begin{proposition}\label{prop-Spherical_harm}
	For each $k\in\N$ there exists a function $a_k:[0,\diam(M))\to\R$ such that the function $F_k:M\to\R$ defined by $F_k(x)=a_k(r(x))f_k(w(x))$ is harmonic. Moreover, $a_k$ admits a power series expansion  of the form
	\[
	a_k(r) = r^p + \sum_{i\ge p+1} \xi_ir^i,
	\]
	where $p$ depends on $k$ and satisfies $p\ge 1$ if $k\ge 1$.
\end{proposition}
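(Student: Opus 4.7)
My plan is to use separation of variables in geodesic polar coordinates around $o$ and reduce the harmonicity of $F_k$ to a linear second-order ODE in the radial variable. In these coordinates the metric of $M$ is $dr^2+g_r$ and the volume form reads $\theta(r)\,dr\,d\xi$, so by Proposition~\ref{prop-harm_mfd_properties}(i) the Laplace--Beltrami operator splits as
$$
\Delta=\partial_r^2+\frac{\theta'(r)}{\theta(r)}\partial_r+\Delta_r
=\partial_r^2+(m-1)h(r)\partial_r+\Delta_r.
$$
Since $\Delta_r f_k=-\lambda_k(r)f_k$, the ansatz $F_k(x)=a(r(x))f_k(w(x))$ is harmonic if and only if $a=a_k$ satisfies
$$
a''(r)+(m-1)h(r)\,a'(r)-\lambda_k(r)\,a(r)=0\qquad\text{on }(0,\diam(M)).
$$

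For $k=0$ one has $\lambda_0\equiv 0$ and $a_0\equiv 1$ works (so $p=0$). Assume now $k\ge 1$. Since $f_k$ is a spherical harmonic on $U_oM$, it has some degree $\ell_k\ge 1$ with $\lambda_{c,k}=\ell_k(\ell_k+m-2)$. Using (\ref{eqn-mean_curvature}) together with $\si(r)=r+O(r^3)$ and $\co(r)=1+O(r^2)$, I would check that $r(m-1)h(r)=(m-1)+O(r^2)$ at the origin; and the explicit description of $g_r$ in Section~\ref{sec:ROSS}, together with the Berger-type diagonalization of $\Delta_r$ recalled in Section~\ref{sec:spectruminross}, gives that $r^2\lambda_k(r)$ extends real-analytically to $r=0$ with value $\lambda_{c,k}$. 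Therefore the ODE has a regular singular point at the origin with indicial polynomial
$$
p^2+(m-2)p-\ell_k(\ell_k+m-2)=0,
$$
whose non-negative root is $p=\ell_k\ge 1$ (the other root being $-(\ell_k+m-2)<0$).

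I would then invoke the Frobenius theorem at the non-negative indicial root $p=\ell_k$ to obtain a convergent power series solution $a_k(r)=r^{\ell_k}+\sum_{i\ge\ell_k+1}\xi_i r^i$ on a neighbourhood $[0,\rho)$ of $0$. Since the ODE is regular on the open interval $(0,\diam(M))$ (no conjugate point before the diameter, by the explicit Jacobi-tensor computation in Section~\ref{sec:ROSS}), standard linear ODE theory extends this germ to a smooth solution on $[0,\diam(M))$. The function $F_k(x)=a_k(r(x))f_k(w(x))$ is then smooth across $o$ — because $r^{\ell_k}f_k(w)$ is precisely the radial/spherical decomposition of a homogeneous harmonic polynomial of degree $\ell_k$ on $T_oM$ pulled back by $\exp_o$ — and harmonic by construction.

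The main obstacle I anticipate is justifying that $\lambda_k(r)$ is genuinely (real-)analytic at $r=0$, and not merely that $r^2\lambda_k(r)$ has the expected limit $\lambda_{c,k}$: this analyticity is what feeds Frobenius and allows to conclude that the series defining $a_k$ actually converges. Here the rank-one structure enters crucially, via the Berger-type explicit formula for $g_r$, which reduces the eigenvalue problem for $\Delta_r$ on each fixed spherical-harmonic isotypic component to an algebraic one depending polynomially on $\si(r)$ and $\co(r)$. Once this analyticity is in hand, the rest of the argument is a routine application of Frobenius theory for Fuchsian linear ODEs.
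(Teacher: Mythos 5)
Your proposal is correct and follows essentially the same route as the paper: separation of variables in polar coordinates reduces harmonicity to the Sturm--Liouville equation $a''+(m-1)ha'-\lambda_k a=0$, which has a regular singular point at $r=0$ with indicial roots $p$ and $-(p+m-2)$, and one takes the power series solution at the non-negative root. The only difference is that where you invoke the Frobenius theorem as a black box, the paper (lacking a precise reference) re-derives it by hand, writing out the coefficient recursion and proving convergence of the formal series via an explicit majorant estimate; your flagged concern about the analyticity of $r\theta'/\theta$ and $r^2\lambda_k(r)$ at $r=0$ is exactly the hypothesis the paper also asserts from the Berger-sphere structure of $g_r$.
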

\begin{proof}
	Let $a:\R_+\to\R$ be a smooth function. Using that the gradient of $a(r)$ is orthogonal
	to the gradient of $f_k(w)$ we have
	$$
	\Delta (a(r)f_k(w)) = a(r)\Delta f_k(w) + f_k(w)\Delta a(r).
	$$
	Since $f_k(w)$ is constant along the geodesic lines from $o$ we get, at any point $x\in M$,
	$$
	\Delta f_k(w) = \tr(D^2f_k(w)_{|_{T_xS_r}}) = \Delta^{S_r}f_k(w) = -\lambda_k(r)f_k(w).
	$$
	On the other hand, because $\Delta r= (m-1)h(r)$ we have
	$$
	\Delta a(r) = (m-1)a'(r)h(r) + a''(r).
	$$
	Finally we get
	\begin{equation}\label{eqn-ODE_harmonic}
		\Delta\bigl(a(r)f_k(w)\bigr) = \bigl(a''(r)+(m-1)h(r)a'(r)-\lambda_k(r)a(r)\bigr)f_k(w).
	\end{equation}
	To get the result we only need to show that a solution $a_k$ of  the Sturm-Liouville ODE
	\begin{equation}\label{eqn-ODE_harmonic2}
	a''+(m-1)ha'-\lambda_ka=0
	\end{equation}
	exists. This is likely to be folklore, but since we could not find a precise reference, we sketch a way to get such a solution. First we rewrite our equation as
	\begin{equation}\label{liouville}
	r^2a''(r)+\left(r\frac{\theta'}{\theta}(r)\right)ra'(r)-r^2\lambda_k(r)a(r)=0,
	\end{equation}
	and we recall that both $r\frac{\theta'}{\theta}(r)$ and $r^2\lambda_k(r)$ admit power series expansions converging on $[0,\diam(M))$:
	\[
	r\frac{\theta'}{\theta}(r)=\sum_{i\ge0} \alpha_ir^i\ \ \ \mbox{ and }\ \ \  r^2\lambda_k(r)=\sum_{i\ge0} \beta_ir^i.
	\]
	Writing the Sturm-Liouville ODE in the form \eqref{liouville},  emphasizes the singular nature of the equation at $r=0$, see \cite[Chapter V]{CourantHilbert}.
	
	{In the neighborhood of any point, the metric of a Riemannian manifold is locally asymptotically Euclidean. Therefore,} the first terms of the expansions of $r\theta'/\theta$ and $r^2\lambda_k(r)$ are $\alpha_0=m-1$, $\alpha_1=0$, $\beta_0=\lambda_{c,k}$, and $\beta_1=0$, where $\lambda_{c,k}$ is the $k-$th eigenvalue of the standard sphere $\S^{m-1}$. In particular, there exists $p\in\N$ such that $\beta_0=p(p+m-2)$, with $p\ge 1$ if $k\ge 1$. We begin by searching a formal solution of Equation \eqref{liouville}, with the goal to show later that this is indeed a solution. In other words, we write a solution as  $a(r)=\sum_{i\ge0}\xi_ir^i$. Plugging it into the equation \eqref{liouville} gives
	\begin{equation}\label{coef}
	\sum_{i\ge0} \left(i(i-1)\xi_i+\gamma_i-\delta_i\right)r^i=0,
	\end{equation}
	with $\gamma_i=\sum_{j=0}^{i} j\xi_j\alpha_{i-j}$ and $\delta_i=\sum_{j=0}^{i} \xi_j\beta_{i-j}$. Thus all the coefficient have to be null in \eqref{coef}. By the explicit values of $\alpha_0$, $\alpha_1$, $\beta_0$, $\beta_1$ we can show by induction that, for $i\le p$, it holds 
	\[
	\xi_i(i(i+m-2)-p(p+m-2))=0,
	\]
	that is $\xi_i=0$ for $i<p$. Moreover this allows us to choose $\xi_p=1$. For $i>p$, equality \eqref{coef} reads as 
	\begin{equation}\label{coef0}
	\xi_i(i(i+m-2)-p(p+m-2)) + \sum_{j=0}^{i-1}(j\alpha_{i-j}-\beta_{i-j})\xi_j=0
	\end{equation}
	and the coefficient $\xi_i$ are recursively well defined. Moreover, equation \ref{coef0} brings to
	\begin{equation}\label{coef1}
	|\xi_i|\left(i(i+m-2)-p(p+m-2)\right) \le \sum_{j=0}^{i-1} j|\xi_j||\alpha_{i-j}|+\sum_{j=0}^{i-1} |\xi_j||\beta_{i-j}|
	\end{equation}
	 We are left to show that the sum defining $a(r)$ converges for $r\in[0,\diam(M))$, that is: the formal solution is a solution indeed.
	 
	 Fix some positive $R<\diam(M)$ and note $C_R = \sum_{i\ge0}(|\alpha_i|+|\beta_i|)R^i$. For $i\in\N$, define  $A_i=\max_{j=0,\dots,i}|\xi_j|R^j$. Then by \eqref{coef1} it follows
	\[
	\begin{aligned} 
	(i(i+m-2)-p(p+m-2))|\xi_i|R^i & \le \sum_{j=0}^{i-1} j|\xi_j|R^j|\alpha_{i-j}|R^{i-j} + \sum_{j=0}^{i-1} |\xi_j|R^{j}|\beta_{i-j}|R^{i-j} \\
		&\le A_{i-1}\left(i \sum_{j=0}^{i-1} R^{i-j}(|\alpha_{i-j}|+ |\beta_{i-j}|)\right)\\
		&\le A_{i-1}iC_R.
	\end{aligned} 
	\]
	Therefore, there exists $i_0\in\N$ such that for all $i\ge i_0$, $|\xi_i|R^i\le A_{i-1}$. This implies that $(|\xi_i|R^i)_{i\in\N}$ is a bounded sequence and the sum defining $a_k(r)$ converges for any $r<R$. Since $R$ is arbitrary, it converges for any $r\in[0,\diam(M))$.	 
\end{proof}

\begin{corollary}\label{cor-Steklov_spectrum_balls}
	For each $r>0$, the functions $(f_k(w))_{k\in\N}$ are eigenfunctions of the \linebreak Dirichlet-to-Neumann operator on $S_r$. Moreover the associated Steklov eigenvalues \linebreak $(\sigma_k(r))_{k\in\N}$ are smooth functions of $r$, and solve  the following ODE
	\begin{equation}\label{eqn-ODE_Steklov}
		\sigma_k' + \sigma_k^2 + (m-1)h\sigma_k = \lambda_k.
	\end{equation}
\end{corollary}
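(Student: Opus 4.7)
The plan is to extract everything from the harmonic functions $F_k$ constructed in Proposition \ref{prop-Spherical_harm}. The key observation is that the restriction of $F_k$ to the ball $B_r$ is harmonic, with boundary trace $a_k(r)f_k$ on the sphere $S_r=\partial B_r$. Provided $a_k(r)\neq 0$, the harmonic extension of $f_k$ into $B_r$ is then $F_k/a_k(r)$. Since the outward unit normal on $S_r$ is the radial field $\partial/\partial r$, and since moving radially leaves $w$ unchanged, the normal derivative at $S_r$ equals $(a_k'(r)/a_k(r))\,f_k(w)$, which identifies $f_k$ as a Dirichlet-to-Neumann eigenfunction with eigenvalue
$$\sigma_k(r)=\frac{a_k'(r)}{a_k(r)}.$$

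With this formula in hand, the Riccati equation is a short computation. From the proof of Proposition \ref{prop-Spherical_harm}, $a_k$ satisfies $a_k''+(m-1)h\,a_k'=\lambda_k a_k$, with $\lambda_k=\lambda_k(r)$ and $h=h(r)$. Differentiating $\sigma_k=a_k'/a_k$ gives
$$\sigma_k'=\frac{a_k''}{a_k}-\sigma_k^2,$$
and substituting $a_k''/a_k=\lambda_k-(m-1)h\sigma_k$ yields $\sigma_k'+\sigma_k^2+(m-1)h\sigma_k=\lambda_k$, as desired.

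The one step that genuinely requires care, and which I view as the main obstacle, is to justify that $a_k(r)\neq 0$ throughout the interval $(0,\diam(M))$, so that the formula $\sigma_k=a_k'/a_k$ is meaningful and smooth. Near $r=0$ this is immediate from the expansion $a_k(r)=r^p+\sum_{i\ge p+1}\xi_ir^i$ provided by Proposition \ref{prop-Spherical_harm}. To rule out a zero at some interior $r_0>0$, I would argue by contradiction: for $k\ge 1$ the function $f_k$ is a nonzero spherical harmonic on $U_oM$, so $a_k(r_0)=0$ would force $F_k$ to vanish identically on $S_{r_0}$; harmonicity then gives $F_k\equiv 0$ on $B_{r_0}$ by the maximum principle, and unique continuation for harmonic functions propagates this to $F_k\equiv 0$ on $M$, contradicting the nontriviality of $F_k$. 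Hence $a_k$ is nowhere vanishing on $(0,\diam(M))$, $\sigma_k$ is smooth there, and the Riccati equation holds on the whole interval.
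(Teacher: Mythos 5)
Your proof is correct and follows essentially the same route as the paper: identify $F_k/a_k(r)$ as the harmonic filling of $f_k(w)$ in $B_r$, read off $\sigma_k(r)=a_k'(r)/a_k(r)$ from the radial normal derivative, and turn the Sturm--Liouville equation for $a_k$ into the Riccati equation by differentiating the logarithmic derivative. Your extra verification that $a_k$ does not vanish on $(0,\diam(M))$ is a point the paper leaves implicit and is sound (indeed, the maximum principle on $B_{r_0}$ already contradicts the expansion $a_k(r)\sim r^p$ near $0$, so the unique continuation step is not even needed).
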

\begin{proof}
	Fix some $R>0$. Up to the multiplicative constant $a_k(R)$, the function $F_k$ given by
	Proposition \ref{prop-Spherical_harm} is the harmonic filling of $f_k(w)$ in the ball $B_R$.
	As the gradient of $f_k(w)$ is tangent to the sphere $S_R$ we have that the normal derivative
	of $F_k$ along the boundary of $B_R$ is
	$$
	\dd{F_k}{r} = a_k'(R)f_k(w) = \frac{a_k'(R)}{a_k(R)}F_k,
	$$
	so that $f_k(w)$ is a Steklov eigenfunction associated to the eigenvalue
	$\sigma_k(R)=\frac{a_k'(R)}{a_k(R)}$.
	
	Differentiating $\frac{a_k'(r)}{a_k(r)}$ and using (\ref{eqn-ODE_harmonic2}) we get the desired ODE.
\end{proof}

%
\subsection{The first eigenfunctions of a ball in a non-compact ROSS}

The particular case of the first spherical harmonics is easy to handle and the associated Steklov eigenfunctions can be computed explicitly. This can be done in the general setting of a harmonic manifold.

Let $M$ be a non-compact harmonic manifold with density function $\theta(t)$, and let
$h(t)=\frac{\theta'(t)}{(m-1)\theta(t)}$ be the mean curvature of spheres of radius $t$.
Let $o$ be some fixed point in $M$ and note $r=d(o,.)$ the distance function to $o$. For any $\xi\in T_oM$ consider the function
$f=\langle \xi,. \rangle$ and let 
$$
a(t)=\frac{\int_0^t \theta(s)\,ds}{\theta(t)}.
$$
This function $a$ has the following properties :
\begin{proposition}\label{prop-function_a}
	If $v(t)=|B_t(o)|$ is the volume of the ball of radius $t$, then $a(t)=\frac{v(t)}{v'(t)}$ and $a$ is an increasing function on $\R_+$.
\end{proposition}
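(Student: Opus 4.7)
The first assertion --- identifying $a(t)$ with $v(t)/v'(t)$ --- is a direct polar-coordinate computation. Since $M$ is harmonic, in normal coordinates centered at $o$ the volume form reads $dv_M=\theta(r)\,dr\,d\xi$ with $\theta$ depending only on the radial parameter. Integrating over $\xi\in U_oM$ gives $v(t)=|U_oM|\int_0^t\theta(s)\,ds$, whence $v'(t)=|U_oM|\,\theta(t)$. The universal constant $|U_oM|$ cancels in the ratio $v/v'$, which is exactly $a(t)$.

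For the monotonicity of $a$ I would differentiate directly,
\[
a'(t)=\frac{\theta(t)^2-\theta'(t)\int_0^t\theta(s)\,ds}{\theta(t)^2},
\]
and reduce the claim to showing that the numerator is nonnegative. Using $\theta(0)=0$, an elementary integration by parts gives $\theta(t)^2=\int_0^t\theta'(s)\theta(t)\,ds$, so the numerator becomes
\[
\int_0^t\bigl[\theta'(s)\theta(t)-\theta'(t)\theta(s)\bigr]\,ds=(m-1)\int_0^t\bigl[h(s)-h(t)\bigr]\theta(s)\theta(t)\,ds,
\]
where I have used Proposition \ref{prop-harm_mfd_properties}(i) to substitute $\theta'/\theta=(m-1)h$. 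Now Proposition \ref{prop-harm_mfd_properties}(ii) says $h$ is decreasing on $\R_+$, so for $0\le s\le t$ one has $h(s)\ge h(t)$, and the integrand is nonnegative. This yields $a'(t)\ge 0$.

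The first part is essentially a tautology once one unpacks the harmonic hypothesis; the content lies in the monotonicity, whose sole nontrivial input is the global decay of the mean curvature of geodesic spheres (Proposition \ref{prop-harm_mfd_properties}(ii)). The only delicate point --- and the one I would expect to be the main obstacle were one working with a weaker manifold class --- is to rewrite the difference $\theta(t)^2-\theta'(t)\int_0^t\theta(s)\,ds$ in the symmetric-in-$s,t$ form above, so that the factor $h(s)-h(t)$ appears naturally and the sign is transparent.
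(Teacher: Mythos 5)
Your proof is correct and, at bottom, uses the same two ingredients as the paper's: the relation $\theta'/\theta=(m-1)h$ and the fact that $h$ is decreasing (Proposition \ref{prop-harm_mfd_properties}). Only the packaging differs. The paper first records the ODE $a'(t)=1-(m-1)h(t)a(t)$ (equation \eqref{eqn-EDO_a}, which it reuses later, e.g.\ to compute the calibration $G$ in the proof of Theorem \ref{isoperimetriqueponderee}) and then shows $(m-1)h(t)v(t)<v'(t)$ by integrating $\Delta r=(m-1)h(r)$ over the annulus $B_t\setminus B_\varepsilon$ and applying the divergence theorem. You instead differentiate the quotient $\int_0^t\theta(s)\,ds/\theta(t)$ directly and symmetrize the numerator into $(m-1)\int_0^t\bigl[h(s)-h(t)\bigr]\theta(s)\theta(t)\,ds$; written out in polar coordinates the paper's annulus integral reduces to exactly your identity $\int_0^t\theta'(s)\,ds=\theta(t)$, so the two arguments are the same computation, yours simply trading the divergence theorem for a one-dimensional manipulation of the density. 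The one point to tighten: as written you conclude only $a'(t)\ge 0$, i.e.\ that $a$ is non-decreasing, whereas the statement asserts that $a$ is increasing and the paper obtains the strict inequality $a'(t)>0$. This follows from your own formula in one line: since $\theta(s)\sim s^{m-1}$ as $s\to 0^+$, one has $h(s)=\theta'(s)/\bigl((m-1)\theta(s)\bigr)\to+\infty$, so $h(s)-h(t)>0$ on a set of positive measure in $(0,t)$ and the integral is strictly positive.
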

\begin{proof}
	As $v(t)=|\S^{m-1}|\int_0^t \theta(s)\,ds$ we get $v'(t)=|\S^{m-1}|\theta(t)=|\partial B_t(o)|$ and $a(t)=\frac{v(t)}{v'(t)}$. 	A simple computation gives
	\begin{equation}\label{eqn-EDO_a}
		a'(t)=1-(m-1)h(t)a(t).
	\end{equation}
	
	As we have $\Delta r=(m-1)h(r)$, using that the function $h$ is decreasing (see Proposition \ref{prop-harm_mfd_properties}) and integrating $\Delta r$ over $B_t\setminus B_\varepsilon$ for some $0 < \varepsilon < t$ we get
	$$
	(m-1)h(t)(v(t)-v(\varepsilon)) < \int_{B_t\setminus B_\varepsilon}\Delta r dv_M\le |\partial B_t| - |\partial B_\varepsilon|
	$$
	where we used that $\nabla r$ is the outward unit normal on $\partial B_t$ and the inward unit normal on $B_\varepsilon$. Letting $\varepsilon$ tend to $0$ we get $(m-1)h(t)v(t) < v'(t)$ and
	$$
	0 < 1 - (m-1)h(t)\frac{v(t)}{v'(t)} = a'(t).
	$$
\end{proof}

As stated in Proposition \ref{prop-harm_mfd_properties}, the function $h(r)$ decreases to some constant $h_0$ which is the mean curvature of horospheres of $M$. If $h_0=0$ then $M$ is the euclidean space and $a(r)=r$. Otherwise, using that $\theta'(r)=(m-1)h(r)\theta(r)$, we get that $\lim_{r\to\infty}a(r)=\frac{1}{(m-1)h_0}>0$.

\begin{proposition}\label{prop-Harm_funct_harm_mfd}
	The function $f(w)$ is an eigenfunction of the Laplacian of any geodesic sphere $S_r(o)$
	associated to the eigenvalue $-(m-1)h'(r)$ and the function $F=a(r)f(w)$ is harmonic.
	In particular, $F$ is a Steklov eigenfunction of the ball $B_r(o)$ associated to the
	eigenvalue $\frac{a'(r)}{a(r)}$.
\end{proposition}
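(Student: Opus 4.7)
The proof proceeds in three stages, corresponding to the three claims of the proposition.

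I begin with the polar-coordinate decomposition of the Laplacian at $o$. On a harmonic manifold, the volume form in normal coordinates is $\theta(r)\,dr\,d\xi$, so for any separated function $\phi(r)\psi(w)$ one has
\[
\Delta\bigl(\phi(r)\psi(w)\bigr) = \bigl[\phi''(r) + (m-1)h(r)\phi'(r)\bigr]\psi(w) + \phi(r)\,\Delta_{S_r}\psi(w),
\]
using that $\nabla\psi(w)$ is tangent to $S_r$ and $\Delta r = (m-1)h(r)$. Differentiating the ODE \eqref{eqn-EDO_a} satisfied by $a(r) = v(r)/v'(r)$ gives the second-order identity $a''(r) + (m-1)h(r)a'(r) = -(m-1)h'(r)a(r)$, and applying the decomposition to $\phi=a$, $\psi=f$ yields
\[
\Delta F = a(r)\bigl[\Delta_{S_r}f(w) - (m-1)h'(r)f(w)\bigr].
\]
Hence the harmonicity of $F$ is equivalent to the Laplace identity $\Delta_{S_r}f(w) = (m-1)h'(r)f(w)$ on geodesic spheres, which is exactly the eigenvalue claim of the proposition (recalling the paper's convention $\Delta f=-\lambda f$).

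The main obstacle is this eigenvalue identity on a general harmonic manifold, where, in contrast to the ROSS case, one has no explicit Berger-type description of the induced metric $g_r$ on $U_oM$. I would establish the eigenfunction property itself---that $f(w)$ is an eigenfunction of $\Delta_{S_r}$ with some $r$-dependent eigenvalue $\mu(r)$---by invoking the results of Ranjan--Shah \cite{Ranjan-Shah} and Knieper--Peyerimhoff \cite{Knieper-Peyerimhoff} already referenced in Remark~3.2. To pin down $\mu(r) = -(m-1)h'(r)$, I would use the Sturm--Liouville equation \eqref{eqn-ODE_harmonic2} from the proof of Proposition~\ref{prop-Spherical_harm}: its unique normalized power-series solution with leading term $r$ must coincide, up to a multiplicative factor, with $a(r)=v(r)/v'(r)$, whose expansion near $o$ begins as $r/m + O(r^3)$ (from $\theta(r)\sim r^{m-1}$). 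Matching the two versions of the ODE then reads off $\mu(r)=-(m-1)h'(r)$.

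With harmonicity of $F$ established, the Steklov conclusion is immediate. The outward unit normal to $\partial B_r(o) = S_r(o)$ is $\partial_r$, so $\partial_\nu F = a'(r)f(w)$ while $F|_{S_r(o)} = a(r)f(w)$. Since $a(r)>0$ for $r>0$ by Proposition~\ref{prop-function_a}, we obtain $\partial_\nu F = (a'(r)/a(r))\,F$ on the boundary, identifying $F$ as a Steklov eigenfunction of $B_r(o)$ associated with the eigenvalue $a'(r)/a(r)$.
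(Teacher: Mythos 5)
Your reduction is sound and matches the computations the paper itself uses elsewhere: the polar decomposition of $\Delta$ reproduces \eqref{eqn-ODE_harmonic}, the identity $a''+(m-1)ha'=-(m-1)h'a$ follows from differentiating \eqref{eqn-EDO_a}, and your final Steklov step is exactly the argument of Corollary \ref{cor-Steklov_spectrum_balls}. Note also that the paper's own ``proof'' of this proposition is nothing but the citation to Ranjan--Shah and Knieper--Peyerimhoff, so deferring the hard geometric input to those references is consistent with what the authors do.

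There is, however, a circularity in your mechanism for pinning down $\mu(r)$. If all you import from the references is that $f(w)$ is a $\Delta_{S_r}$-eigenfunction with \emph{some} eigenvalue $\mu(r)$, then the normalized Frobenius solution of $\phi''+(m-1)h\phi'-\mu\phi=0$ with leading term $r$ depends on the unknown $\mu$; there is no a priori reason it should be proportional to $a(r)=v(r)/v'(r)$. Asserting that it is proportional to $a$ is exactly asserting that $a(r)f(w)$ is harmonic, i.e.\ the statement you are trying to prove (indeed $\mu$ is recovered from the radial factor $\phi$ by $\mu=(\phi''+(m-1)h\phi')/\phi$, so ``the radial factor is $a$'' and ``$\mu=-(m-1)h'$'' are equivalent). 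To close the gap, either import the full statement $\mu(r)=-(m-1)h'(r)$ from \cite{Ranjan-Shah,Knieper-Peyerimhoff} (which is what the paper does), or derive it independently: the radial function $\psi(d(x,y))$ with $\psi'=\theta^{-1}$ is harmonic in $x$ away from $y$, and differentiating in $y$ in the direction $\xi$ (first variation of arc length gives $\partial^y_\xi d(x,y)=-\langle\xi,w_y(x)\rangle$) shows that $\theta(r)^{-1}f(w)$ is harmonic on $M\setminus\{o\}$; plugging $\phi=\theta^{-1}$ into your decomposition and using $\phi''+(m-1)h\phi'=-(m-1)h'\phi$ then forces $\Delta_{S_r}f(w)=(m-1)h'(r)f(w)$, after which the regular solution $a(r)f(w)$ is harmonic as you argued.
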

\begin{proof}
	Cf. \cite{Ranjan-Shah, Knieper-Peyerimhoff}.
\end{proof}

The main question is whether the Steklov eigenvalue associated to the first spherical harmonics is
the first non-zero eigenvalue. We prove this is the case for a non-compact ROSS.
The first step is a comparison result for solutions of equation (\ref{eqn-ODE_Steklov}) :
\begin{lemma}
	Let $\mu_1,\mu_2:\R_+^*\to\R$ be two functions such that $ \mu_1 \le \mu_2$ on $\R_+^*$,
	and, for $k=1,2$, let $\alpha_k$ be non-negative solutions of
	$$
	\alpha_k' + \alpha_k^2 + (m-1)h\alpha_k = \mu_k.
	$$
	If there exists $t_0>0$ such that $\alpha_1\le\alpha_2$ on $]0,t_0]$ then we have that
	$\alpha_1 \le \alpha_2$ on $\R_+^*$.
\end{lemma}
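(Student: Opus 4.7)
The plan is to reduce the problem to a linear first-order differential inequality for the difference $\beta=\alpha_2-\alpha_1$ and then close the argument via an integrating factor (a Gr\"onwall-type step).

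First I would subtract the two Riccati equations. Since
\[
\alpha_2^2-\alpha_1^2=(\alpha_2-\alpha_1)(\alpha_1+\alpha_2),
\]
setting $\beta=\alpha_2-\alpha_1$ and $P=\alpha_1+\alpha_2+(m-1)h$, one obtains the linear ODE
\[
\beta'+P\beta=\mu_2-\mu_1.
\]
By hypothesis the right-hand side is non-negative on $\R_+^*$, and $\beta(t_0)\ge 0$.

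Next I would introduce the integrating factor. The coefficient $P$ is continuous on $\R_+^*$ (the $\alpha_k$ are solutions there and $h$ is smooth), so for every $t\ge t_0$ the integral $\int_{t_0}^t P(s)\,ds$ is well defined. Multiplying the equation by $\exp\bigl(\int_{t_0}^t P(s)\,ds\bigr)$ yields
\[
\frac{d}{dt}\Bigl(\beta(t)\,e^{\int_{t_0}^t P(s)\,ds}\Bigr)=(\mu_2(t)-\mu_1(t))\,e^{\int_{t_0}^t P(s)\,ds}\ge 0.
\]
Integrating from $t_0$ to $t$ gives
\[
\beta(t)\,e^{\int_{t_0}^t P(s)\,ds}\ge\beta(t_0)\ge 0,
\]
hence $\beta(t)\ge 0$ for every $t\ge t_0$. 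Combined with the assumed inequality on $]0,t_0]$, this yields $\alpha_1\le\alpha_2$ on $\R_+^*$.

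I do not foresee a serious obstacle here: the whole argument is the standard comparison principle for first-order linear ODEs applied to the Riccati difference. The only mild point to mind is that the integrating factor is only used on intervals bounded away from $0$, which is why the hypothesis is formulated at a positive time $t_0$; the non-negativity of the $\alpha_k$ is not even needed for this step (it will matter elsewhere in the paper, where the lemma is applied to solutions of \eqref{eqn-ODE_Steklov} with appropriate sign), and the Riccati non-linearity disappears automatically after subtraction because it linearises in $\beta$.
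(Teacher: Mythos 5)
Your argument is correct and is essentially the paper's own proof: the paper also forms $\beta=\alpha_2-\alpha_1$, derives $\beta'+b\beta=\mu_2-\mu_1\ge 0$ with $b=\alpha_1+\alpha_2+(m-1)h$, and concludes via a positive solution $\gamma$ of $\gamma'+b\gamma=0$, which is just your integrating factor in disguise (showing $(\beta/\gamma)'\ge 0$). Your remark that everything is done on $[t_0,\infty)$, away from the possible singularity of the coefficient at $0$, matches the paper's setup as well.
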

\begin{proof}
	Let $\beta=\alpha_2-\alpha_1$. From the differential equations satisfied by $\alpha_1$ and $\alpha_2$ we have
	$$
	\beta'+b\beta = \mu_2 - \mu_1 \ge 0
	$$
	where $b=\alpha_1+\alpha_2+(m-1)h$. If $\gamma$ is a positive solution of $\gamma'+b\gamma=0$, we get $\beta'-\frac{\gamma'}{\gamma}\beta \ge 0$.	As $\gamma$ is positive we also have $\left(\frac{\beta}{\gamma}\right)' \ge 0$ which implies that, for all $t\ge t_0$, $\beta(t)\ge\frac{\beta(t_0)}{\gamma(t_0)}\gamma(t)\ge 0$
\end{proof}

From this lemma, it is sufficient to prove that, for any $r>0$, $\lambda_1(r)$ is the first non-zero eigenvalue of the geodesic sphere $S_r(o)$ to get that $\sigma_1(r)$ is the first non-zero Steklov eigenvalue of the geodesic ball $B_r(o)$. Up to some rescaling, the spectrum of the Berger metric $g_c-\sum_{k=1}^{d-1}\delta_k\otimes\delta_k+t\sum_{k=1}^{d-1}\delta_k\otimes\delta_k$ was computed in \cite{Tanno1,Tanno2}. It appears that if the parameter $t$ is greater than $1$, than the first spherical harmonics remain associated to the first eigenvalue. For the geodesic sphere $S_r(o)$, the parameter is $t=\cosh(r)>1$, so that we have :
\begin{proposition}
	Let $M$ be a non-compact ROSS and $o\in M$ be some fixed point. For an orthonormal
	basis $(\xi_1,\dots,\xi_m)$ of $T_oM$, consider the functions $f_i=\langle \xi_i,.\rangle$, $i=1,\dots,m$.
	
	For any $R>0$, the functions $f_i(w)$ are eigenfunctions of the Dirichlet-to-Neumann operator on $S_R(o)$ associated to the first Steklov eigenvalue $\sigma_1(B_R)$, and their harmonic filling in $B_R(o)$ are $F_i=\frac{a(r)}{a(R)}f_i(w)$.
\end{proposition}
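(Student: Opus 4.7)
The statement has two parts: that $F_i=\frac{a(r)}{a(R)}f_i(w)$ is the harmonic extension of the boundary datum $f_i(w)$, and that the corresponding Steklov eigenvalue $\sigma(R):=\frac{a'(R)}{a(R)}$ is the \emph{first} nonzero one. The first part is immediate from Proposition \ref{prop-Harm_funct_harm_mfd}: the function $a(r)f_i(w)$ is already harmonic on $B_R(o)$ and equals $a(R)f_i(w)$ on $\partial B_R(o)$. The nontrivial content is therefore to identify $\sigma(R)$ with $\sigma_1(B_R)$.

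My plan is to compare $\sigma(R)$ with every other Steklov eigenvalue $\sigma_k(R)$ produced by Corollary \ref{cor-Steklov_spectrum_balls} from the common Hilbert basis $(f_k)_{k\in\N}$ of $L^2(U_oM)$. The Riccati-type ODE (\ref{eqn-ODE_Steklov}), together with the comparison lemma above, reduces the problem to two pointwise inequalities: (i) $\lambda_1(r)\le\lambda_k(r)$ for every $k\ge 1$ and every $r>0$, where $\lambda_1(r)=-(m-1)h'(r)$ is the common eigenvalue of the first spherical harmonics on $\Delta_r$ furnished by Proposition \ref{prop-Harm_funct_harm_mfd}; and (ii) $\sigma_1(r)\le\sigma_k(r)$ on some initial interval $(0,r_0]$. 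Condition (ii) follows from the power-series expansion $a_k(r)=r^{p_k}+O(r^{p_k+1})$ established in the proof of Proposition \ref{prop-Spherical_harm}, which gives $\sigma_k(r)\sim\frac{p_k}{r}$ as $r\to 0^+$; the first spherical harmonics give $p_k=1$, while the other basis elements give $p_k\ge 2$, since in the Euclidean limit the only degree-one homogeneous harmonic polynomials on $T_oM$ are the linear forms $\langle\xi,\cdot\rangle$, and they span exactly the $m$-dimensional space $\{f_1,\dots,f_m\}$.

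For condition (i), the pullback metric $g_r$ on $U_oM$ is, up to the global conformal factor $\si^2(r)$, a Berger metric whose fiber-rescaling parameter exceeds $1$ for every $r>0$ (because $\co(r)=\cosh(r)>1$ in the non-compact case). Tanno's analysis \cite{Tanno1,Tanno2} of the Berger Laplace spectrum in this regime shows that the first spherical harmonics realize the smallest nonzero eigenvalue, i.e.\ $\lambda_1(r)\le\lambda_k(r)$ for all $k\ge 1$. Combining (i) and (ii) via the comparison lemma gives $\sigma_1(r)\le\sigma_k(r)$ for every $r>0$, so $\sigma(R)=\sigma_1(B_R)$ and $f_1(w),\dots,f_m(w)$ all lie in the corresponding eigenspace.

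The only real obstacle is the uniform-in-$r$ Berger spectral ordering required in (i), and this is precisely where non-compactness is crucial: on a compact ROSS the parameter is $\co(r)<1$, and Tanno's analysis produces a crossing of eigenvalues beyond a threshold radius, so for large $R$ the first spherical harmonics no longer minimize $\Delta_R$, the comparison argument collapses, and the analogue of the proposition fails, in line with Theorem \ref{th:main4}.
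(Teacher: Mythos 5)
Your argument is correct and follows essentially the same route as the paper: the harmonic filling comes from Proposition \ref{prop-Harm_funct_harm_mfd}, the ordering $\lambda_1(r)\le\lambda_k(r)$ comes from Tanno's computation of the Berger spectrum with parameter $\cosh(r)>1$, and the Riccati comparison lemma transfers this to the Steklov eigenvalues. You in fact supply a detail the paper leaves implicit, namely the verification of the initial ordering $\sigma_1\le\sigma_k$ near $r=0$ via the asymptotics $\sigma_k(r)\sim p_k/r$ from the power-series expansion of $a_k$.
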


	The above Proposition has to be compared with Theorem 2.1 in \cite{Binoy-Santhanam}. Note that such a statement does not hold on compact ROSS. In this case, the parameter $t$ of the Berger metric is $t=\cos(r)<1$ and for a geodesic sphere $S_r(o)$ with $r$ large enough, the functions $f_i$ are no more associated to the first non-zero eigenvalue of the Laplacian of $S_r(o)$ (see the computations in \cite{Tanno1,Tanno2} and Remark 7.3 in \cite{Berard-Bergery-Bourguignon}).

%
%
%
%
\section{Brock-Weinstock inequalities in non-compact ROSS}\label{sec:steklov}

In this section we prove Theorem \ref{th:main1}.
%
\subsection{A weighted isoperimetric inequality on Harmonic manifolds}

Let $M$ be a non-compact harmonic manifold of dimension $m$. This subsection is devoted to the proof of a weighted isoperimetric inequality which is a key point in the eigenvalue estimate. We first introduce the weighted perimeter involved in this inequality.
\begin{definition}[Weighted perimeter]
	Let $M$ be a non-compact harmonic manifold with volume density function $\theta$, and let $o\in M$ be some fixed point.
	The weighted perimeter of a domain $\Omega\subset M$ is
	\[
	P_o (\Omega)=\int_{\partial\Omega}a(r)^2\,dv_{\partial\Omega}
	\]
	where $r$ is the distance function to $o$ and $a(r)=\frac{\int_0^r\theta(s)ds}{\theta(r)}=\frac{v(r)}{v'(r)}$.
\end{definition}

	Isoperimetric inequalities for weighted perimeters were considered in \cite{B-B-M-P} in the Euclidean space. For the perimeter $P_o$ defined above, we prove such an inequality on harmonic manifolds using a calibration technique as in \cite{Brasco-DePhilippis-Ruffini}. In the special case of ROSS, the following inequality was proved in lemma 3.3 of \cite{Binoy-Santhanam}.

\begin{theorem}\label{isoperimetriqueponderee}
	Let  $M$ be a non-compact harmonic manifold and $o\in M$ be some fixed point. For any domain $\Omega\subset M$ we have,
	\begin{equation}\label{isoperimetric}
	P_o (\Omega)\ge P_o (B),
	\end{equation}
	where $B$ is the ball centered in $o$ such that $|\Omega|=|B|$. Moreover equality holds if and only if $\Omega=B$.
\end{theorem}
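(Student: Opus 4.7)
The plan is to implement a calibration argument in the spirit of \cite{Brasco-DePhilippis-Ruffini}: produce a smooth vector field $X$ on $M$ of pointwise norm $a(r)^2$ whose divergence is a purely radial function $g(r)$. The divergence theorem will then bound $P_o(\Omega)$ from below by $\int_\Omega g(r)\,dv_M$, and this volume integral will in turn be compared to its counterpart over the ball $B_R$ via a one-dimensional rearrangement.

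The natural candidate is $X = a(r)^2 \nabla r$. Using the ODE $a'(r) = 1 - (m-1)h(r)a(r)$ from \eqref{eqn-EDO_a} together with $\Delta r = (m-1)h(r)$, a direct computation gives
$$\div X = 2a(r) a'(r) + (m-1) h(r) a(r)^2 = 2a(r) - (m-1)h(r) a(r)^2 =: g(r).$$
Since $\langle \nabla r, \nu\rangle \le |\nabla r| = 1$ on $\partial\Omega$ and $a(r)^2 \ge 0$, the divergence theorem (applied on $\Omega \setminus B_\varepsilon(o)$ and passed to the limit using $a(0) = 0$) yields
$$P_o(\Omega) = \int_{\partial\Omega} a(r)^2\,dv_{\partial\Omega} \ge \int_{\partial\Omega} a(r)^2 \langle \nabla r, \nu\rangle\,dv_{\partial\Omega} = \int_\Omega g(r)\,dv_M.$$

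It remains to compare $\int_\Omega g\,dv_M$ with $P_o(B_R)$. The key observation is that $v''(r) = (m-1)h(r)v'(r)$, so one checks algebraically that $g(r)v'(r) = \bigl(v(r)^2/v'(r)\bigr)'$, and hence integration in polar coordinates gives $\int_{B_R} g\,dv_M = v(R)^2/v'(R) = P_o(B_R)$. On the other hand, differentiating $g$ and invoking \eqref{eqn-EDO_a} once more yields $g'(r) = 2a'(r)^2 - (m-1)h'(r)a(r)^2$, which is strictly positive by Proposition \ref{prop-function_a} ($a' > 0$) and Proposition \ref{prop-harm_mfd_properties}(ii) ($h' \le 0$). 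Because $g$ is strictly increasing in $r$ and $|\Omega| = |B_R|$, a standard layer-cake argument shows $\int_\Omega g \ge \int_{B_R} g$, with equality only when $\Omega$ and $B_R$ agree up to a set of measure zero. Combining all the inequalities gives $P_o(\Omega) \ge P_o(B_R)$; the equality case further requires $\langle \nabla r, \nu\rangle = 1$ a.e.\ on $\partial\Omega$, which together with $|\Omega|=|B_R|$ forces $\Omega = B_R$. The main conceptual obstacle is really only in guessing the correct calibration $X = a(r)^2 \nabla r$: once this is chosen, the whole proof rests on the two short identities $\div X = g$ and $g\,v' = (v^2/v')'$, each a one-line consequence of the defining ODE for $a$.
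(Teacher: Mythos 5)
Your proposal is correct and follows essentially the same route as the paper: the calibration $X=a(r)^2\nabla r$, whose divergence $g(r)=2a(r)-(m-1)h(r)a(r)^2$ coincides with the function $G$ in the paper, the monotonicity $g'=2(a')^2-(m-1)h'a^2>0$, and the bathtub comparison of $\int_\Omega g$ with $\int_{B_R} g$ are all exactly the paper's argument. The only cosmetic difference is that you verify $\int_{B_R}g\,dv_M=P_o(B_R)$ via the identity $g\,v'=(v^2/v')'$ instead of applying the divergence theorem on the ball, where $\langle\nabla r,\nu\rangle=1$ makes it immediate.
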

\begin{proof}
The proof relies on a calibration technique developed in \cite{Brasco-DePhilippis-Ruffini} (see also \cite{Kolesnikov-Zhdanov}).
Let   
\[
G(r)=\div\left( a(r)^2\frac{\partial }{\partial r}\right) =a(r)^2\Delta r + 2a(r)a'(r)=(m-1)a^2(r)h(r)+2a(r)a'(r).	
\]
From \eqref{eqn-EDO_a} we get $G(r)=a(r)+a(r)a'(r)$, and a direct computation shows then that $G'$ is given by
\[
G'(r)=2a'(r)^2-(m-1)h'(r)a(r)^2.
\]
Since $r\mapsto h(r)$ is a decreasing function (cf. Proposition \ref{prop-harm_mfd_properties}), we get  that $G$ is an increasing function of the distance from $o$:
\begin{equation}\label{G'>0}
G'(r)>0. 
\end{equation}
Let $B$ be the ball centered at $o$ such that $|B|=|\Omega|$, and let $R$ be its radius. We have
\[
\int_{\Omega} G(r)\,dv_M=\int_{\Omega} \div\left(a(r)^2\frac{\partial}{\partial r}\right)\,dv_M=\int_{\partial\Omega}a(r)^2\langle \frac{\partial}{\partial r},\nu_\Omega \rangle\,dv_{\partial\Omega}\le P_o (\Omega),
\]
and
\[
\int_{B} G(r)\,dv_M=\int_{B} \div\left(a(r)^2\frac{\partial}{\partial r}\right)\,dv_M=\int_{\partial B}a(R)^2\langle \frac{\partial}{\partial r},\nu_{B} \rangle\,dv_{\partial B}= P_o (B).
\]
Thus
\begin{equation}\label{isoper}
\begin{aligned}
P_o (\Omega)-P_o (B)&\ge \int_{\Omega}G(r)\,dv_M-\int_{B}G(r)\,dv_M\\
&=\int_{\Omega\setminus B}G(r)\,dv_M-\int_{B\setminus\Omega}G(r)\,dv_M\\
&\ge \int_{\Omega\setminus B}|G(r)-G(R)|\,dv_M +\int_{B\setminus\Omega}|G(r)-G(R)|\,dv_M\\
\end{aligned} 
\end{equation}
where in the last inequality we used the facts that $G$ is an increasing function of $r$ and that $|\Omega\setminus B|=|B\setminus\Omega|$ (which follows from $|\Omega|=|B|$).
To characterize the equality case, we notice that the last quantity is positive when $\Omega\not=B$, as $|G(r)-G(R)|>0$ for $r\not=R$. Thus we can have equality in  \eqref{isoperimetric} only if $\Omega=B$.
\end{proof}

%
\subsection{Quantitative stability of the weighted isoperimetric inequality}

In this section we show a quantitative version of the isoperimetric inequality proved in the previous section. In what follows we work in the framework of the previous section: $B$ is the ball centered in the fixed point $o$ such that $|B|=|\Omega|$, $R$ is its radius, for each $s\ge 0$ we note $B_s$ the ball of radius $s$ centered in $o$, and we denote 
\[
\delta=|\Omega\setminus B|=|B\setminus\Omega|.
\] 
As before, we set for $s\ge0$, $v(s)=|B_s|$ the volume of the ball of radius $s$; as the manifold $M$ is harmonic, the volume of $B_s$ does not depend on the center of the ball, and since $M$ is non-compact, $v$ is an increasing function on $\R_+$. Let moreover $R_{ext}>R_{int}\ge0$ be defined by the relations
\[
|B_{R_{ext}}|-|B_R|=|\Omega\setminus B|=|B\setminus\Omega|=|B_R|-|B_{R_{int}}|.
\]
\begin{lemma}
	It holds
	\begin{itemize}
		\item[$(i)$] $R_{ext}=v^{-1}(|\Omega|+\delta)$;
		\item[$(ii)$] $R_{int}=v^{-1}(|\Omega|-\delta)$;
		\item[$(iii)$] $R=v^{-1}(|\Omega|)$;
		\item[$(iv)$] $a(t)=\frac{v(t)}{v'(t)}$;
		\item[$(v)$] $a(v^{-1}(s))=s(v^{-1})'(s)$.
	\end{itemize}
\end{lemma}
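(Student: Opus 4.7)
The plan is to observe that this is a purely computational lemma whose five assertions are immediate consequences of the defining relations together with the inverse function theorem. The only structural input needed is that, since $M$ is non-compact harmonic, the function $v(t)=|\S^{m-1}|\int_0^t\theta(s)\,ds$ is smooth and strictly increasing on $\R_+$ (because $\theta>0$), so $v^{-1}$ exists and is smooth on the image.

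For items (iii), (i), and (ii) I would simply unwrap the definitions. Since $|B|=|\Omega|$ and $B=B_R$, we have $v(R)=|\Omega|$, which is (iii). The defining relation $v(R_{ext})-v(R)=\delta$ then gives $v(R_{ext})=|\Omega|+\delta$, hence (i), and similarly $v(R)-v(R_{int})=\delta$ yields $v(R_{int})=|\Omega|-\delta$, hence (ii). Here the monotonicity of $v$ ensures that $R_{ext}$ and $R_{int}$ are uniquely determined by the defining volume equalities.

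Identity (iv) has already been established during the proof of Proposition \ref{prop-function_a}: differentiation of $v(t)=|\S^{m-1}|\int_0^t\theta(s)\,ds$ gives $v'(t)=|\S^{m-1}|\theta(t)$, so $v(t)/v'(t)$ matches the definition of $a(t)$. For (v), the inverse function theorem applied to $v$ gives $(v^{-1})'(s)=1/v'(v^{-1}(s))$; setting $t=v^{-1}(s)$, so that $s=v(t)$, we obtain
\[
s\,(v^{-1})'(s)=\frac{v(t)}{v'(t)}=a(t)=a(v^{-1}(s)),
\]
where the middle equality is (iv). Since each step is elementary, no substantive obstacle arises; the role of the lemma is purely to fix a convenient dictionary between the radii $R, R_{int}, R_{ext}$, the volume function $v$, and the weight $a$, to be used in the quantitative isoperimetric argument that follows.
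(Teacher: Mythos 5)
Your proof is correct and follows exactly the paper's own argument: items (i)--(iii) by unwinding the definitions, (iv) by the computation already made in Proposition \ref{prop-function_a}, and (v) by combining the inverse function theorem with (iv). Nothing further is needed.
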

\begin{proof}
The proof of $(i)-(iii)$ follows straightforwardly from the definitions of $R_{ext}$, $R$, $R_{int}$ and $\delta$. The point $(iv)$ was already observed in Proposition \ref{prop-function_a} as a consequence of the definitions of the density function and of $a$ and $v$. To prove $(v)$ just use $(iv)$ to get
\[
(v^{-1})'(s)=
\frac{1}{v'(v^{-1}(s))}\overset{(iv)}{=}\frac{a(v^{-1}(s))}{s}.
\]
\end{proof}

The quantitative stability for the weighted isoperimetric inequality is obtained in two step: we first prove it for domains which are a priori close to the ball and then use a continuity argument to get it for arbitrary domains.

\begin{proposition}\label{coroquantitative}
	There exists $\bar{\delta}=\bar{\delta}(|\Omega|,M)>0$ and $C=C(|\Omega|,M)>0$ such that if $|\Omega\setminus B|<\bar{\delta}$ then
	\[
	P_o (\Omega)-P_o (B)\ge C|\Omega\setminus B|^2.
	\]
\end{proposition}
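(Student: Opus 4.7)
The plan is to extract from the chain of inequalities already obtained in the proof of Theorem \ref{isoperimetriqueponderee} a quantitative lower bound of order $\delta^2$, where $\delta=|\Omega\setminus B|=|B\setminus\Omega|$. The starting point is
\[
P_o(\Omega)-P_o(B)\ge\int_{\Omega\setminus B}\bigl(G(r)-G(R)\bigr)\,dv_M+\int_{B\setminus\Omega}\bigl(G(R)-G(r)\bigr)\,dv_M,
\]
in which both integrands are non-negative by the monotonicity $G'>0$ established in \eqref{G'>0}. Since $G(r)-G(R)$ vanishes on $\partial B$, this bound alone is not yet of the right order: I will squeeze it further by a radial rearrangement, and then Taylor-expand near $\delta=0$.

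First I would show that, for fixed $\delta$, the integral $\int_{\Omega\setminus B}(G(r)-G(R))\,dv_M$ is minimized when $\Omega\setminus B$ is the annulus $B_{R_{ext}}\setminus B_R$ of volume $\delta$ packed against $\partial B$, and similarly that $\int_{B\setminus\Omega}(G(R)-G(r))\,dv_M$ is minimized when $B\setminus\Omega=B_R\setminus B_{R_{int}}$. The natural tool is a layer-cake identity: writing $G(r)-G(R)=\int_R^r G'(s)\,ds$ on $\{r\ge R\}$, Fubini gives
\[
\int_{\Omega\setminus B}\bigl(G(r)-G(R)\bigr)\,dv_M=\int_R^{+\infty} G'(s)\,\bigl|\{x\in\Omega\setminus B:r(x)\ge s\}\bigr|\,ds,
\]
and the trivial upper bound $|\{x\in\Omega\setminus B:r(x)<s\}|\le\min(\delta,\,v(s)-v(R))$ yields the desired comparison, with equality exactly for the annulus. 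An identical argument handles the inside piece.

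After this rearrangement, the lower bound reduces to
\[
\Phi(\delta):=\int_{B_{R_{ext}}\setminus B_R}\bigl(G(r)-G(R)\bigr)\,dv_M+\int_{B_R\setminus B_{R_{int}}}\bigl(G(R)-G(r)\bigr)\,dv_M,
\]
which, after the substitution $u=v(r)$ and setting $H(u):=G(v^{-1}(u))$, becomes
\[
\Phi(\delta)=\int_{|\Omega|}^{|\Omega|+\delta}\bigl(H(u)-H(|\Omega|)\bigr)\,du+\int_{|\Omega|-\delta}^{|\Omega|}\bigl(H(|\Omega|)-H(u)\bigr)\,du.
\]
Since $H$ is smooth with $H'(|\Omega|)=G'(R)/v'(R)>0$, continuity provides $\bar\delta=\bar\delta(|\Omega|,M)>0$ on which $H'(u)\ge H'(|\Omega|)/2$; two elementary integrations then give $\Phi(\delta)\ge\tfrac{1}{2}H'(|\Omega|)\,\delta^2$ for $\delta\le\bar\delta$, so that the statement holds with $C=G'(R)/(2v'(R))$, which depends only on $|\Omega|$ and $M$.

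The main obstacle is the rearrangement step: one must verify that the two concentric annuli really achieve the infimum of the relevant integrals among all admissible sets of fixed volume. The layer-cake identity above handles this cleanly, but it is the only non-mechanical ingredient; once past it, the $\delta^2$ bound drops out of elementary calculus applied to $H$, ultimately because $G'(R)>0$.
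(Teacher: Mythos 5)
Your argument is correct and follows essentially the same route as the paper: reduce to the two concentric annuli $B_{R_{ext}}\setminus B_R$ and $B_R\setminus B_{R_{int}}$ using the monotonicity of $G$, then extract the $\delta^2$ term by a second-order expansion in the volume variable (your $H$ is exactly the derivative of the paper's function $g(s)=s\,a(v^{-1}(s))$, so your constant $G'(R)/(2v'(R))$ coincides with the paper's $g''(|\Omega|)/2$). The only difference is presentational: you justify the rearrangement via a layer-cake identity where the paper merely asserts it, and you estimate $H'$ directly instead of computing the annulus integrals in closed form.
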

\begin{proof}
	By \eqref{isoper} we know that 
	\[
	P_o (\Omega)-P_o (B)\ge\int_{\Omega\setminus B} G(r)\,dv_M-\int_{B\setminus \Omega} G(r)\,dv_M.
	\]
	Since $G$ is an increasing function it is easy to show that
	\begin{equation}\label{ineq:lem2}
	\begin{aligned}
	P_o (\Omega)-P_o (B)&\ge\int_{\Omega\setminus B} G(r)\,dv_M-\int_{B\setminus \Omega} G(r)\,dv_M\\
	&\ge \int_{B_{R_{ext}}\setminus B} G(r)\,dv_M-\int_{B\setminus B_{R_{int}}} G(r)\,dv_M
	\\
	&=|\mathbb{S}^{m-1}|\int_{R}^{R_{ext}}G(t)\theta(t)\,dt-|\mathbb S^{m-1}|\int_{R_{int}}^{R}G(t)\theta(t)\,dt.
	\end{aligned}
	\end{equation}
	Let us recall now that $G(t)=a(t)+a(t)a'(t)$, from which we get
	$$
	G(t)\theta(t)=a(t)\theta(t)+a'(t)\int_0^t\theta(s)\,ds = \frac{d}{dt}\left[a(t)\int_0^t\theta(s)\,ds\right]
	$$
	Thus we have, for $0<S<T$,
	\[
	\int_S^T G(t)\theta(t)\,dt = a(T)\int_0^T\theta(t)\,dt-a(T)\int_0^S\theta(t)\,dt,
	\]
	By applying this equality with $S=R$, $T=R_{ext}$ and $S=R_{int}$, $T=R$ in the last term of \eqref{ineq:lem2} we get
	\begin{equation}\label{eqQ1}
	P_o (\Omega)-P_o (B)\ge a(R_{ext})|B_{R_{ext}}|-2a(R)|B|+a(R_{int})|B_{R_{int}}|.
	\end{equation}
	Let $g:\R_+\to\R$ be defined by $g(s)=s^2(v^{-1})'(s)=sa(v^{-1}(s))$ so that \eqref{eqQ1} can be written as
	\begin{equation}\label{eqQ2}
	P_o (\Omega)-P_o (B) \ge g(|\Omega|+\delta)-2g(|\Omega|)+g(|\Omega|-\delta)=\frac{\delta^2}{2}(g''(s_1)+g''(s_2)).
	\end{equation}
	for some $s_1,s_2\in]|\Omega|-\delta,|\Omega|+\delta[$. A simple computation now gives
	$$
	g''(s)=\frac{2a'(t)^2-(m-1)a(t)^2h'(t)}{v'(t)} > 0,
	$$
	where $t=v^{-1}(s)$. Choosing $\bar{\delta}$ such that $g''(s)\ge\frac{g''(|\Omega|)}{2}$ on $]|\Omega|-\bar{\delta},|\Omega|+\bar{\delta}[$, and assuming that $|\Omega\setminus B|=\delta<\bar{\delta}$, we get
	\[
	P_o (\Omega)-P_o (B)\ge  \frac{g''(|\Omega|)}{2}|\Omega\setminus B|^2.
	\]
\end{proof}

\begin{remark}
	Following \cite{Brasco-DePhilippis-Ruffini}, it is possible to get the full stability result from equation (\ref{eqQ2}) in the special case where the function $g''$ is non-increasing. In fact, as $\delta=|\Omega\setminus B|\le|\Omega|$, equation (\ref{eqQ2}) gives
	$$
	P_o (\Omega)-P_o (B) \ge g''(2|\Omega|)|\Omega\setminus B|^2
	$$
	without any restriction on $\delta$. A computation of the third derivative of $g$ gives
	$$
	g'''(s)=\frac{6a'(t)a''(t) + (m-1)^2a(t)^2h(t)h'(t) - (m-1)a(t)^2h''(t)}{v'(t)^2}.
	$$
	where $t=v^{-1}(s)$. The sign of $g'''$ is not obvious for an arbitrary harmonic manifold. However, if $M$ is a non-compact ROSS, a direct computation proves that $a''\le 0$ and $h''\ge 0$ so that $g''$ is non-increasing and the stability result follows.
\end{remark}

To get a full stability result in the general case, we have to show that it is not restrictive to only consider sets which are near to the ball in $L^1$. Namely we need the following continuity lemma.
\begin{lemma}\label{continuity}
	Let $(\Omega_k)_{k\in\N}$ be a sequence of domains in $M$ such that, for all $k\in\N$, $|\Omega_k|=c$ and let $B$ be the ball centered in $o$ with $|B|=c$. Suppose that  $P_o (\Omega_k)-P_o (B)\to0$ as $k\to\infty$. Then 
	\[
	\lim_{k\to\infty}|\Omega_k\setminus B|\to0.
	\]
\end{lemma}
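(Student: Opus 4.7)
The plan is to argue by contradiction, reusing the chain of estimates from the proof of Proposition~\ref{coroquantitative} but observing that those estimates are actually valid for all values of $\delta=|\Omega\setminus B|$, not only small ones. The only step in that earlier proof requiring $\delta$ to be small was the final Taylor expansion of $g$; up to that point everything is global.

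First I would reproduce the rearrangement argument verbatim: starting from equation~\eqref{isoper} and using only the fact that $G$ is strictly increasing in $r$ (established globally in \eqref{G'>0}), together with $|\Omega_k\setminus B|=|B\setminus\Omega_k|=\delta_k$, one obtains exactly the bound in \eqref{ineq:lem2} and then, via the antiderivative identity $G\theta=\frac{d}{dt}[a(t)\int_0^t\theta(s)ds]$, the inequality
\[
P_o(\Omega_k)-P_o(B)\ge g(c+\delta_k)-2g(c)+g(c-\delta_k) =: F(\delta_k),
\]
where $c=|\Omega_k|=|B|$ and $g(s)=s\,a(v^{-1}(s))$ is the function introduced in the proof of Proposition~\ref{coroquantitative}. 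Since $\delta_k\le c$ automatically, the argument of $g$ remains in $[0,2c]$ where $g$ is smooth.

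Next I would analyse $F$ on $[0,c]$. One has $F(0)=0$ and $F'(\delta)=g'(c+\delta)-g'(c-\delta)$, so $F'(0)=0$, while
\[
F''(\delta)=g''(c+\delta)+g''(c-\delta).
\]
The explicit expression $g''(s)=\bigl(2a'(t)^2-(m-1)a(t)^2 h'(t)\bigr)/v'(t)$ with $t=v^{-1}(s)$, together with $h'\le 0$ from Proposition~\ref{prop-harm_mfd_properties}(ii), shows $g''>0$ on $(0,2c]$, hence $F$ is strictly convex on $[0,c]$ with $F(0)=F'(0)=0$. It follows that $F$ is continuous, strictly increasing on $(0,c]$, and strictly positive off of $0$.

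To conclude, suppose for contradiction that $|\Omega_k\setminus B|\not\to 0$. Extracting a subsequence, assume $\delta_k\ge\eta>0$ for all $k$. By monotonicity of $F$,
\[
P_o(\Omega_k)-P_o(B)\ge F(\delta_k)\ge F(\eta)>0,
\]
contradicting the hypothesis that $P_o(\Omega_k)-P_o(B)\to 0$. The only delicate point is confirming that the rearrangement bound leading to $F(\delta_k)$ holds uniformly for $\delta_k\in(0,c]$ rather than only for small deficit; this is precisely what the global monotonicity of $G$ and the global positivity of $g''$ give, and both were established earlier for non-compact harmonic manifolds.
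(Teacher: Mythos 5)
Your argument is correct, but it takes a genuinely different route from the paper's. The paper proves the lemma directly: for any $\rho>R$ it extracts from \eqref{isoper} the bound $P_o(\Omega_k)-P_o(B)\ge (G(\rho)-G(R))\bigl(|\Omega_k\setminus B|-|B_\rho\setminus B|\bigr)$, hence $|\Omega_k\setminus B|\le \frac{P_o(\Omega_k)-P_o(B)}{G(\rho)-G(R)}+|B_\rho\setminus B|$, and then lets $k\to\infty$ and $\rho\to R$; this uses only the strict monotonicity of $G$ from \eqref{G'>0}. You instead observe (correctly) that the rearrangement chain \eqref{ineq:lem2} and the identity $G\theta=\frac{d}{dt}\bigl[a(t)\int_0^t\theta\bigr]$ are valid for every $\delta\in(0,c]$, yielding $P_o(\Omega)-P_o(B)\ge F(\delta)$ with $F(\delta)=g(c+\delta)-2g(c)+g(c-\delta)$, and then you exploit $g''>0$ (from $a'>0$, Proposition \ref{prop-function_a}, and $h'\le 0$, Proposition \ref{prop-harm_mfd_properties}) to get that $F$ is strictly increasing with $F(0)=0$. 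This is a stronger statement than the lemma: it gives an explicit modulus of continuity and, combined with the local Taylor estimate of Proposition \ref{coroquantitative}, would let you prove Theorem \ref{thm-quantitative_isop} without any contradiction/continuity step. One small imprecision: $g(s)=s\,a(v^{-1}(s))$ behaves like $s^{1+1/m}$ near $s=0$, so $g$ is only $C^1$ up to $0$ and $g''(s)\to+\infty$ as $s\to 0^+$; this does not harm your argument, since $F'(\delta)=\int_{c-\delta}^{c+\delta}g''(s)\,ds>0$ still makes sense ($g''$ being positive and integrable near the origin), but the phrase ``where $g$ is smooth'' should be weakened accordingly.
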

\begin{proof}
	Let $R$ be the radius of $B$. For any $\rho>R$, by repeating the proof of Theorem \ref{isoperimetriqueponderee}, until formula \eqref{isoper}, we get that
	\[
	\begin{aligned}
	P_o(\Omega_k)-P_o(B)&\ge \int_{\Omega_k\setminus B}|G(r)-G(R)|\,dv_M\\
	&\ge \int_{\Omega_k\setminus B_\rho}(G(r)-G(R))\,dv_M\\
	&\ge (G(\rho)-G(R))|\Omega_k\setminus B_\rho| \\
	&\ge (G(\rho)-G(R))|\Omega_k\setminus B| - (G(\rho)-G(R))|B_\rho\setminus B|
	\end{aligned}
	\]
	Therefore, we have
	$$
	|\Omega_k\setminus B| \le \frac{P_o(\Omega_k)-P_o(B)}{G(\rho)-G(R)} + |B_\rho\setminus B|
	$$
	and when $k$ tends to $\infty$ we get, for any $\rho>R$,
	\[
	\limsup_{k\to \infty}|\Omega_k\setminus B|\le|B_\rho\setminus B|.
	\] Letting $\rho$ tend to $R$ gives the result.
\end{proof}

\begin{theorem}\label{thm-quantitative_isop}
	Let  $M$ be a non-compact harmonic manifold and $o\in M$ be some fixed point. Let $\Omega\subset M$ be a domain and $B$ be the ball centered in $o$ such that $|B|=|\Omega|$. Then there exists a constant $C=C(|\Omega|,M)$ such that 
	\begin{equation}\label{eq:quantitative}
	P_o (\Omega)-P_o (B)\ge C|\Omega\setminus B|^2.
	\end{equation}
\end{theorem}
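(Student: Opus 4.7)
My strategy is a dichotomy in $|\Omega\setminus B|$, combining the \emph{local} quantitative estimate of Proposition \ref{coroquantitative} with the continuity statement of Lemma \ref{continuity} as a compactness substitute for the ``far'' regime. Set $c=|\Omega|$, let $B$ be the ball centred at $o$ with $|B|=c$, and let $\bar\delta=\bar\delta(c,M)$ and $C_0=C_0(c,M)$ be the constants furnished by Proposition \ref{coroquantitative}.

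When $|\Omega\setminus B|<\bar\delta$, Proposition \ref{coroquantitative} applies directly and gives $P_o(\Omega)-P_o(B)\ge C_0|\Omega\setminus B|^2$. When $|\Omega\setminus B|\ge\bar\delta$, I claim there is a uniform lower bound on the bare deficit
\[
\eta := \inf\bigl\{\,P_o(\Omega')-P_o(B) \ :\ |\Omega'|=c,\ |\Omega'\setminus B|\ge\bar\delta\,\bigr\} > 0.
\]
Arguing by contradiction, if $\eta=0$ then a minimising sequence $(\Omega_n)$ would satisfy $|\Omega_n|=c$, $|\Omega_n\setminus B|\ge\bar\delta$ and $P_o(\Omega_n)-P_o(B)\to 0$; but Lemma \ref{continuity} then forces $|\Omega_n\setminus B|\to 0$, contradicting $|\Omega_n\setminus B|\ge\bar\delta$. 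Since $M$ is non-compact we also have $|\Omega\setminus B|\le|\Omega|=c$, so in the second regime
\[
P_o(\Omega)-P_o(B)\ge\eta\ge \frac{\eta}{c^2}\,|\Omega\setminus B|^2.
\]
Taking $C:=\min(C_0,\,\eta/c^2)$, which depends only on $c$ and $M$, yields \eqref{eq:quantitative} in both regimes and completes the proof.

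The substantive work has already been carried out in Proposition \ref{coroquantitative} (a second-order Taylor-type expansion around the ball producing the quadratic term) and in Lemma \ref{continuity} (where the strict monotonicity $G'>0$ is exploited to extract $L^1$-convergence to $B$ from a vanishing deficit). What remains is only the gluing above; the one point to notice is that in the ``far'' regime the contradiction argument produces a lower bound on the deficit itself, which via the trivial constraint $|\Omega\setminus B|\le c$ is immediately upgraded to the desired quadratic bound. There is no genuine obstacle beyond recognising that Lemma \ref{continuity} is precisely the statement needed to replace compactness in the space of domains of fixed volume.
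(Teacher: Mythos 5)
Your proof is correct and follows essentially the same route as the paper: a dichotomy on $|\Omega\setminus B|$ versus $\bar\delta$, using Proposition \ref{coroquantitative} in the near regime and Lemma \ref{continuity} (via a contradiction with a minimising sequence) to get a uniform positive lower bound $\eta$ on the deficit in the far regime, then upgrading $\eta$ to the quadratic bound through $|\Omega\setminus B|\le|\Omega|$. Your write-up actually makes explicit the compactness-substitute argument that the paper leaves implicit.
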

\begin{proof}
	Let $\Omega\in M$ and let $\bar\delta>0$ be the parameter of Proposition \ref{coroquantitative}. If $|\Omega\setminus B|<\bar{\delta}$ we are done. Otherwise, by Lemma \ref{continuity} there exists $\bar{\varepsilon}>0$ such that 
	\[
	P_o (\Omega)-P_o (B)>\bar{\varepsilon}=\frac{\bar{\varepsilon}}{|\Omega\setminus B|^2}|\Omega\setminus B|^2\ge \frac{\overline{\varepsilon}}{|\Omega|^2}|\Omega\setminus B|^2.
	\]  
\end{proof}

%
\subsection{Choosing a base point}

Let $M$ be a harmonic manifold and $\Omega$ be a bounded domain of $M$. The proof of the Brock-Weinstock inequality relies on a transplantation method consisting in plugging the first Steklov eigenfunctions of a ball in the Rayleigh quotient of the domain. In order to get an estimate of the first eigenvalue of $\Omega$ we need these functions to be
orthogonal to the constant. The aim of the following lemma is to prove that we can chose the center of the ball in such a way.

For $y\in M$, note $r_y=d(y,.)$ the distance function to $y$, and for $\xi\in U_yM$, let $F_{y,\xi}$ be the
harmonic function on $M$ given by Proposition \ref{prop-Harm_funct_harm_mfd}:
$$
\forall x\in M\ \ \ \ \ F_{y,\xi}(x) = a(r_y)\langle \xi,w_y(x) \rangle
$$
where $w_y(x)$ is the unique unit  vector in $U_yM$ such that $x=\exp_y(r_y(x)w_y(x))$.
\begin{lemma} \label{lem-Base_point}
	Let $M$ be a non-compact harmonic manifold and $\Omega\subset M$ a compact set. There exists a  point $o\in M$ such that 
	\begin{equation}\label{barycenter}
	\forall \xi\in T_oM\ \ \ \int_{\partial\Omega}F_{o,\xi}(x)\,dv_{\partial\Omega}(x)=0.
	\end{equation} 
\end{lemma}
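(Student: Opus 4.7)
The plan is to obtain $o$ as a minimizer of a coercive auxiliary functional whose critical points automatically satisfy \eqref{barycenter}. Set $\phi(r)=\int_0^r a(s)\,ds$ and define
\[
\Psi(o)=\int_{\partial\Omega}\phi(r_o(x))\,dv_{\partial\Omega}(x),\qquad o\in M.
\]
The motivation is the identity $\nabla_o r_o(x)=-w_o(x)$ (the gradient with respect to $o$ of the distance to $x$, evaluated at $o$, is the unit vector pointing away from $x$). Differentiating under the integral sign then gives
\[
d\Psi_o(v)=-\int_{\partial\Omega}a(r_o(x))\,\langle w_o(x),v\rangle\,dv_{\partial\Omega}(x)=-\int_{\partial\Omega}F_{o,v}(x)\,dv_{\partial\Omega}(x),
\]
so the critical points of $\Psi$ are exactly the points at which \eqref{barycenter} holds.

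First I would verify that $\Psi$ is of class $C^1$ on $M$. Since a non-compact harmonic manifold is simply connected, complete and has no conjugate points (Proposition \ref{prop-harm_mfd_properties} and its proof), the exponential map is a diffeomorphism, so $(o,x)\mapsto r_o(x)$ is smooth off the diagonal. At $o=x$ the squared distance $r_o(x)^2$ is smooth; moreover $a(s)\sim s/m$ as $s\to 0^+$ (since $\theta(s)\sim c\,s^{m-1}$ by the local Euclidean behavior of the metric), so $\phi(r)=r^2\widetilde{\phi}(r^2)$ with $\widetilde\phi$ smooth near $0$. Consequently $\phi\circ r_o$ is smooth in $o$ for every fixed $x\in\partial\Omega$, and the $C^1$ regularity of $\Psi$ follows from dominated convergence and compactness of $\partial\Omega$.

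Next I would establish coercivity. Fix $p_0\in\partial\Omega$ and set $D=\diam(\Omega)$. For $d(o,p_0)\ge D$ and every $x\in\partial\Omega$ the triangle inequality gives $r_o(x)\ge d(o,p_0)-D$, hence
\[
\Psi(o)\ge |\partial\Omega|\,\phi\bigl(d(o,p_0)-D\bigr).
\]
By Proposition \ref{prop-function_a} the function $a$ is strictly positive and increasing on $\R_+^*$, so $\phi(r)\to+\infty$ as $r\to+\infty$ (indeed $\phi(r)\sim r/((m-1)h_0)$ if $h_0>0$, while $\phi(r)=r^2/(2m)$ in the Euclidean case). Therefore $\Psi(o)\to+\infty$ when $o$ leaves every compact subset of $M$, and $\Psi$ attains its minimum at some $o\in M$. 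At this minimizer $d\Psi_o=0$, which by the computation above is exactly \eqref{barycenter} for every $\xi=v\in T_oM$.

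The most delicate point is the smoothness of $\Psi$ at points $o\in\partial\Omega$, where $r_o$ itself is non-smooth; this is circumvented by the remark that $\phi\circ r_o$ is a smooth function of the smooth quantity $r_o^2$. The two essential structural inputs are the simply connected/no-conjugate-points geometry (which makes distance functions well-behaved globally) and the monotonicity and positivity of $a$ (which makes $\phi$ coercive and the gradient computation meaningful).
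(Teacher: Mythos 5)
Your proof is correct and is essentially the paper's own argument: the paper minimizes the same functional $B(y)=\int_{\partial\Omega}b(d(x,y))\,dv_{\partial\Omega}(x)$ with $b(r)=\int_0^r a(s)\,ds$, computes $\langle\nabla B(y),\xi\rangle=-\int_{\partial\Omega}F_{y,\xi}\,dv_{\partial\Omega}$ via $\nabla_y d(x,y)=-w_y(x)$, and obtains existence of a minimizer from the coercivity coming from $a$ being increasing (Proposition \ref{prop-function_a}). Your additional remarks on $C^1$ regularity near the diagonal are a harmless refinement of what the paper leaves implicit.
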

\begin{proof}
	Consider the functions $b:\R_+\to\R_+$ and $B:M\to\R_+$ defined by 
	\[
	b(r)=\int_{0}^r a(s)\,ds,\qquad B(y)=\int_{\partial\Omega}b(d(x,y))\,dv_{\partial\Omega}(x).
	\]
	For any $\xi\in T_yM$ we have
	\[
	\begin{aligned}
	\langle \nabla B(y),\xi\rangle&=\int_{\partial\Omega } b'(d(x,y))\langle \nabla r_x(y),\xi\rangle \,dv_{\partial\Omega}(x)\\
	&=-\int_{\partial\Omega }a(d(x,y))\langle w_y(x),\xi \rangle \,dv_{\partial\Omega}(x)\\
	&=-\int_{\partial\Omega } F_{y,\xi}(x) \,dv_{\partial\Omega}(x).
	\end{aligned}
	\]
	Suppose now that $o$ is a minimum of $B$. Since  $\nabla B(o)=0$ we get that $o$ is such that \eqref{barycenter} holds true.
	
	To show that such a point $o$ exists indeed, we have just to notice that $\lim_{r\to\infty}b(r)=+\infty$, as $a$ is an increasing function (see Proposition \ref{prop-function_a}). Therefore the sublevels of $B$ are compact sets and $B$ has a minimum.
\end{proof}

\begin{remark}
	It is a natural question whether the minimum of $B$ is unique. As the function $a$ is increasing (cf. Proposition \ref{prop-function_a}), the function $b$ is convex on $\R_+$. If $M$ is a non-compact ROSS or a Dameck-Ricci space, the curvature being non-positive, the distance function is also convex. From these two facts we have that $B$ is convex and thus has a unique minimum.
\end{remark}

%
\subsection{The Brock-Weinstock inequality holds true on non-compact ROSS}

From now on we assume that $M$ is a non-compact ROSS and we use the notations of Section~\ref{sec:ROSS}. Let $o\in M$ be a fixed point. We first make the connection between the weighted perimeter and the first Steklov eigenfunctions of a ball. Following the notation of section 3, for any $x\in M$ we note $r(x)=d(o,x)$ and $w(x)\in U_oM$ the unique unit vector such that $x=\exp_o(r(x)w(x))$. From now on we fix an orthonormal basis $(\xi_1,\dots,\xi_m)$ of $T_oM$ and we write, for $i=1,\dots,m$, $F_i=a(r)\langle \xi_i,w \rangle$.
\begin{lemma}
	We have
	\[
	P_o (\Omega)=\sum_{i=1}^m \int_{\partial \Omega}F_i^2\,dv_{\partial\Omega}.
	\]
\end{lemma}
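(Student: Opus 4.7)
The proof is essentially a one-line computation, so my plan is simply to identify the right pointwise identity and integrate it.

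The key observation is that at every point $x \in M$, the vector $w(x) \in U_oM \subset T_oM$ is by construction a unit vector. Expanding it in the orthonormal basis $(\xi_1, \ldots, \xi_m)$ and applying Parseval's identity (equivalently, the Pythagorean theorem in $T_oM$) yields
\[
\sum_{i=1}^m \langle \xi_i, w(x)\rangle^2 = \|w(x)\|^2 = 1.
\]
Multiplying by $a(r(x))^2$ then gives the pointwise identity
\[
\sum_{i=1}^m F_i(x)^2 = a(r(x))^2 \sum_{i=1}^m \langle \xi_i, w(x)\rangle^2 = a(r(x))^2.
\]

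The rest is immediate: I would integrate this identity over $\partial\Omega$ with respect to the induced volume form, interchange the finite sum with the integral, and recognize the right-hand side as the definition of $P_o(\Omega)$, namely $\int_{\partial\Omega} a(r)^2 \, dv_{\partial\Omega}$. There is no real obstacle here; the only thing to note is that the identity is valid everywhere that $w$ is defined, i.e.\ away from $o$, and since $\{o\}$ is a null set for the boundary measure (or $o \notin \partial\Omega$ in the generic case), this causes no issue in the integration.
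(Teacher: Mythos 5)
Your proof is correct and follows exactly the same route as the paper's: expand the unit vector $w(x)$ in the orthonormal basis to get $\sum_{i=1}^m\langle\xi_i,w(x)\rangle^2=1$, multiply by $a(r)^2$, and integrate over $\partial\Omega$. The remark about the set where $w$ is undefined being negligible is a harmless extra precaution.
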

\begin{proof}
	By the definition of $F_i$ we have
	\[
	\begin{aligned} 
	\sum_{i=1}^m \int_{\partial \Omega}F_i^2\,dv_{\partial\Omega}
	&=\int_{\partial \Omega}a^2(r)\sum_{i=1}^m\langle\xi_i,w(x)\rangle^2\,dv_{\partial\Omega} \\
	&=\int_{\partial\Omega} a^2(r)\,dv_{\partial\Omega}
	\\
	&=P_o (\Omega).
	\end{aligned}
	\]
\end{proof}

The first eigenvalue of the ball $B_R$ satisfies
$$
\sigma_1(B_R)\int_{\partial B_R}F_i^2dv_{\partial B_R} = \int_{B_R}|\nabla F_i|^2dv_M
$$
and summing over $i$ we get
$$
P_o (B_R)\sigma_1(B_R) = \int_{B_R}\sum_{i=1}^{m}|\nabla F_i|^2dv_M.
$$
The second observation is that the integrand of the right-hand term is a radial function.
\begin{lemma}
	For any $x\in M$ we have
	$$
	\sum_{i=1}^{m}|\nabla F_i(x)|^2 = H(r(x))
	$$
	where $H:\R_+\to\R_+$ is given by $H(r)=a'(r)^2-(m-1)h'(r)a(r)^2$.
\end{lemma}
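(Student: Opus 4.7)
The plan is to exploit the harmonicity of the functions $F_i$ together with the fact that $\sum_i F_i^2$ turns out to be a radial function, reducing the claim to a one-variable ODE computation. Since each $F_i$ is harmonic by Proposition \ref{prop-Harm_funct_harm_mfd}, we have
$$
\Delta(F_i^2) = 2|\nabla F_i|^2 + 2F_i\Delta F_i = 2|\nabla F_i|^2,
$$
so summing over $i$ gives $2\sum_{i=1}^{m}|\nabla F_i|^2 = \Delta\bigl(\sum_{i=1}^{m}F_i^2\bigr)$. Because $(\xi_1,\dots,\xi_m)$ is an orthonormal basis of $T_oM$ and $w(x)\in U_oM$ is a unit vector, one has $\sum_i\langle\xi_i,w(x)\rangle^2 = |w(x)|^2 = 1$, hence $\sum_i F_i(x)^2 = a(r(x))^2$ is indeed a radial function of the distance to $o$.

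It then remains to compute the Laplacian of the radial function $r\mapsto a(r)^2$. For any smooth $u(r)$ depending only on the distance to $o$, the standard formula $\Delta u(r) = u''(r) + (m-1)h(r)u'(r)$ applies, since $\Delta r = (m-1)h(r)$ by Proposition \ref{prop-harm_mfd_properties}(i). Applying this to $u=a^2$ yields
$$
\Delta(a^2) = 2(a')^2 + 2aa'' + 2(m-1)h\,aa'.
$$

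The final ingredient is the ODE (\ref{eqn-EDO_a}), namely $a' = 1 - (m-1)ha$, which upon differentiation gives $a'' = -(m-1)(h'a + ha')$. Substituting this into the expression above, the terms containing $haa'$ cancel and one is left with $\Delta(a^2) = 2(a')^2 - 2(m-1)h'\,a^2 = 2H(r)$, proving the identity. No step looks truly difficult: the essential observation is that, thanks to orthonormality, the sum $\sum_i F_i^2$ collapses to the radial quantity $a(r)^2$, after which the whole computation reduces to the ODE satisfied by $a$. I would expect the only mildly delicate point to be keeping track of the sign conventions in the derivation formulas for $h$ and $\theta$, but these are fixed in Proposition \ref{prop-harm_mfd_properties} and formula (\ref{eqn-EDO_a}).
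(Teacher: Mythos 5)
Your proof is correct, and it takes a genuinely different route from the paper. The paper computes $\nabla f_i$ explicitly via the inverse Jacobi tensor $A_\eta^*(r)^{-1}$ (using the eigenvalues $\alpha,\beta$ specific to the ROSS curvature tensor), observes that $\sum_i|\nabla f_i|^2$ is radial, and then evaluates it by averaging over the geodesic sphere $S_r(o)$ and invoking the fact that each $f_i$ is a sphere eigenfunction with eigenvalue $-(m-1)h'(r)$; finally it splits $\nabla F_i$ into radial and tangential parts. You instead bypass all of the Jacobi-tensor machinery with the identity $\Delta(F_i^2)=2|\nabla F_i|^2$ for harmonic $F_i$, collapse $\sum_i F_i^2$ to $a(r)^2$ by orthonormality (the same Parseval observation the paper uses for the preceding lemma on $P_o$), and finish with the radial Laplacian formula and the ODE \eqref{eqn-EDO_a}. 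Your argument is shorter and, notably, more general: it uses only Proposition \ref{prop-Harm_funct_harm_mfd} and Proposition \ref{prop-harm_mfd_properties}, so it establishes the lemma on any non-compact harmonic manifold, whereas the paper's computation is tied to the ROSS structure (though the paper's route yields the extra pointwise information $\sum_i|\nabla f_i|^2=\frac{d-1}{\alpha^2}+\frac{m-d}{\beta^2}$, which is of independent interest). The only point worth a remark in your write-up is that the computation is carried out on $M\setminus\{o\}$, where $r$ is smooth, and extends to $o$ by continuity; this is harmless and the paper's proof has the same feature.
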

\begin{proof}
	For $i=1,\dots,m$, note $f_i=\langle \xi_i,w \rangle$, so that $F_i=a(r)f_i$. Let $x\in M$ and note $\eta=w(x)$, so that, from \cite[Section 7]{Knieper-Peyerimhoff}  and the computation of Jacobi tensors on $M$ (cf. section \ref{sec:ROSS}) we have
	$$
	\begin{aligned}
		\nabla f_i(x) & = A_\eta^*(r)^{-1}(\xi_i-\langle \xi_i,\eta \rangle\eta) \\
			& = \sum_{j=2}^m \langle \xi_i,\eta_j \rangle A_\eta^*(r)^{-1}(\eta_j) \\
			& = \sum_{j=2}^d \frac{\langle \xi_i,\eta_j \rangle}{\alpha(r(x))}\eta_j
				+ \sum_{j=d+1}^m \frac{\langle \xi_i,\eta_j \rangle}{\beta(r(x))}\eta_j
	\end{aligned}
	$$
	where $(\eta_1,\dots,\eta_m)$ is an orthonormal basis of $T_oM$ with $\eta_1=\eta$ and $\eta_{j+1}=J_j\eta$ for $j=1,\dots,d-1$. From these equalities we get
	$$
	\begin{aligned}
		\sum_{i=1}^{m}|\nabla f_i(x)|^2 &
			= \sum_{i=1}^{m}\sum_{j=2}^d \frac{\langle \xi_i,\eta_j \rangle^2}{\alpha(r(x))^2}
			+ \sum_{i=1}^{m}\sum_{j=d+1}^m \frac{\langle \xi_i,\eta_j \rangle^2}{\beta(r(x))^2} \\
			& = \frac{d-1}{\alpha(r(x))^2} + \frac{m-d}{\beta(r(x))^2}
	\end{aligned}
	$$
	Therefore, the function $\sum_{i=1}^{m}|\nabla f_i|^2$ is a radial function on $M$, and since each of the $f_i$ are eigenfunctions of geodesic spheres with eigenvalue $-(m-1)h'(r)$, taking the mean over $S_r(o)$ we get
	$$
	\begin{aligned}
		\sum_{i=1}^{m}|\nabla f_i|^2 &
			= \frac{1}{|S_r(o)|}\sum_{i=1}^{m}\int_{S_r(o)}|\nabla f_i|^2dv_{S_r(o)} \\
			& = \frac{-(m-1)h'(r)}{|S_r(o)|}\sum_{i=1}^{m}\int_{S_r(o)}f_i^2dv_{S_r(o)} \\
			& = -(m-1)h'(r)
	\end{aligned}
	$$
	
	For $i=1,\dots,m$, we have $\nabla F_i=a'(r)f_i\dd{}{r}+a(r)\nabla f_i$, and, since
	$\dd{}{r}$ and $\nabla f_i$ are orthogonal we obtain
		$$
		\begin{aligned}
			\sum_{i=1}^{m}|\nabla F_i|^2 &
			= a'(r)^2\sum_{i=1}^{m}f_i^2 + a(r)^2\sum_{i=1}^{m}|\nabla f_i|^2 \\
			& = a'(r)^2-(m-1)h'(r)a(r)^2.
		\end{aligned}
		$$
\end{proof}
In the sequel, we note $Q(\Omega)=\int_{\Omega}H(r)$, so that $\sigma_1(B_R)=\frac{Q(B_R)}{P_o (B_R)}$.

\begin{lemma}
	The function $H$ is non-increasing on $\R_+$.
\end{lemma}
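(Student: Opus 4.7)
The plan is to compute $H'$ directly using the ODE satisfied by $a$, rewrite the result as $-(m-1)$ times a quadratic form in $(a',a)$ with coefficients $2h$, $4h'$, $h''$, and show this form is positive semidefinite via the explicit formula for $h$ on a non-compact ROSS.

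Differentiating $H=(a')^2-(m-1)h'a^2$ gives
\[
H'(r)=2a'a''-(m-1)h''a^2-2(m-1)h'aa'.
\]
Differentiating the ODE \eqref{eqn-EDO_a}, $a'+(m-1)ha=1$, yields $a''=-(m-1)(h'a+ha')$, and substituting for $2a'a''$ produces
\[
H'(r)=-(m-1)\bigl[2h(a')^2+4h'aa'+h''a^2\bigr].
\]
Hence $H'\le 0$ reduces to showing that the quadratic form $2hx^2+4h'xy+h''y^2$ is positive semidefinite, i.e. to the three inequalities $h\ge 0$, $h''\ge 0$, and $hh''\ge 2(h')^2$ (the last being equivalent to concavity of $1/h$).

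The first two conditions follow straightforwardly from the explicit formula \eqref{eqn-mean_curvature}, $h(r)=\coth(r)+\mu\tanh(r)$ with $\mu=(d-1)/(m-1)\in[0,1]$ in the non-compact case: setting $c=\cosh r$, $s=\sinh r$, one has $h''=2c/s^3-2\mu s/c^3$, which is non-negative because $c\ge s$ and $\mu\le 1$. The principal obstacle is the determinant condition $hh''\ge 2(h')^2$, which is the one place where ROSS-specific information truly enters. I would verify it by a direct expansion: using $h'=-1/s^2+\mu/c^2$, a careful but elementary bookkeeping produces
\[
hh''-2(h')^2=\frac{2}{s^2c^2}\Bigl[(1+\mu)(c^2-\mu s^2)+2\mu\Bigr],
\]
and the bracket is manifestly non-negative since $\mu\ge 0$ and $c^2-\mu s^2\ge c^2-s^2=1>0$. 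Plugging this back shows $H'\le 0$, proving the lemma.
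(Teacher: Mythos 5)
Your proof is correct and follows essentially the same route as the paper: both reduce $H'\le 0$ to the non-negativity of the quadratic form $2h(a')^2+4h'aa'+h''a^2$ via the ODE $a'=1-(m-1)ha$, and both hinge on the same determinant inequality $hh''\ge 2(h')^2$, which the paper establishes by completing the square after bounding $h''$ from below and you verify by direct expansion (your identity $hh''-2(h')^2=\tfrac{2}{s^2c^2}\bigl(c^2-\mu^2s^2+3\mu\bigr)$ checks out). The differences are purely presentational.
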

\begin{proof}
	We have
	$$
	H'(r)=2a'(r)a''(r) - (m-1)h''(r)a(r)^2 - 2(m-1)h'(r)a(r)a'(r),
	$$
	and since $a'(r)=1-(m-1)a(r)h(r)$ we get
	$$
	H'(r)=-2(m-1)a'(r)^2h(r) - 4(m-1)a(r)a'(r)h'(r) - (m-1)h''(r)a(r)^2.
	$$
	From the expression \ref{eqn-mean_curvature} of $h(r)$, and using that $\frac{d-1}{m-1}\le 1$, we get
	$$
	h(r)h''(r) \ge 2\left(\frac{1}{s(r)^2}+\frac{d-1}{m-1}\frac{1}{c(r)^2}\right)^2
		\ge \left(\frac{1}{s(r)^2}-\frac{d-1}{m-1}\frac{1}{c(r)^2}\right)^2 = h'(r)^2
	$$
	so that
	$$
	\begin{aligned}
		H'(r)  &
			\le -2(m-1)a'(r)^2h(r) - 4(m-1)a(r)a'(r)h'(r) - (m-1)a(r)^2\frac{h'(r)^2}{h(r)} \\
			& = -2(m-1)h(r)\left(a'(r) + a(r)\frac{h'(r)}{h(r)}\right)^2 \\
			& \le 0.
	\end{aligned}
	$$
\end{proof}

\begin{theorem}\label{steklovonH}
	Let $M$ be a non-compact ROSS, let $\Omega\subset M$ be a domain and let $B$ be a ball such that $|\Omega|=|B|$. Then
	\[
	\sigma_1(\Omega)\le\sigma_1(B),
	\]
	with equality if and only if $\Omega$ is a ball.
\end{theorem}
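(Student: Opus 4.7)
The plan is to combine the variational characterization of $\sigma_1(\Omega)$ with the first eigenfunctions of the ball as test functions, relying on three ingredients prepared above: the eigenfunctions $F_i=a(r)\langle\xi_i,w\rangle$ of the ball, the weighted isoperimetric inequality of Theorem \ref{isoperimetriqueponderee}, and the monotonicity of $H$.

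First, I would apply Lemma \ref{lem-Base_point} to pick the base point $o\in M$ so that $\int_{\partial\Omega} F_i\, dv_{\partial\Omega}=0$ for every $i=1,\dots,m$, where $F_i$ is built from an orthonormal frame $(\xi_1,\dots,\xi_m)$ of $T_oM$. This makes each $F_i$ admissible in the Rayleigh quotient \eqref{steklovvariational}, so
\[
\sigma_1(\Omega)\int_{\partial\Omega}F_i^2\,dv_{\partial\Omega}\le\int_{\Omega}|\nabla F_i|^2\,dv_M.
\]
Summing over $i$ and using the two identities established in the previous lemmas, namely $\sum_i F_i^2=a(r)^2$ on $\partial\Omega$ and $\sum_i|\nabla F_i|^2=H(r)$ in $\Omega$, gives
\[
\sigma_1(\Omega)\,P_o(\Omega)\le Q(\Omega),
\]
where $Q(\Omega)=\int_\Omega H(r)\,dv_M$.

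Second, I would upper bound $Q(\Omega)$ by $Q(B)$ by an equimeasurable rearrangement argument. Since $|B\setminus\Omega|=|\Omega\setminus B|$, $H$ is non-increasing, and $r\le R\le r'$ for $x\in B\setminus\Omega$ and $x'\in\Omega\setminus B$, one has $H\ge H(R)$ on $B\setminus\Omega$ and $H\le H(R)$ on $\Omega\setminus B$, so
\[
Q(B)-Q(\Omega)=\int_{B\setminus\Omega}H(r)\,dv_M-\int_{\Omega\setminus B}H(r)\,dv_M\ge 0.
\]
Combining with the previous step and the identity $Q(B)=\sigma_1(B)P_o(B)$, we obtain
\[
\sigma_1(\Omega)\,P_o(\Omega)\le Q(\Omega)\le Q(B)=\sigma_1(B)\,P_o(B).
\]

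Finally, the weighted isoperimetric inequality of Theorem \ref{isoperimetriqueponderee} yields $P_o(B)\le P_o(\Omega)$. Together with $\sigma_1(B)>0$ this gives
\[
\sigma_1(\Omega)\,P_o(\Omega)\le \sigma_1(B)\,P_o(B)\le\sigma_1(B)\,P_o(\Omega),
\]
whence $\sigma_1(\Omega)\le\sigma_1(B)$. For the equality case, assuming $\sigma_1(\Omega)=\sigma_1(B)$ forces, in particular, equality in $\sigma_1(B)P_o(B)\le\sigma_1(B)P_o(\Omega)$, so $P_o(\Omega)=P_o(B)$; the equality statement in Theorem \ref{isoperimetriqueponderee} then yields $\Omega=B$. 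The main subtlety is ensuring the $F_i$ are genuine admissible test functions, which is exactly why the choice of center provided by Lemma \ref{lem-Base_point} is crucial; once this is in hand the rest is an interplay between the two monotonicities ($H$ decreasing and the weighted isoperimetric inequality) going in opposite directions.
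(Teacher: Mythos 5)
Your proposal is correct and follows essentially the same route as the paper: choose the base point via Lemma \ref{lem-Base_point}, test with the $F_i$, sum to get $\sigma_1(\Omega)P_o(\Omega)\le Q(\Omega)$, bound $Q(\Omega)\le Q(B)$ via the monotonicity of $H$, and close with the weighted isoperimetric inequality and its rigidity for the equality case. The only difference is cosmetic (you divide the final chain by $P_o(\Omega)$ instead of $P_o(B)$), so nothing further is needed.
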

\begin{proof}
	Using Lemma \ref{lem-Base_point} we choose a base point $o\in M$ such that the functions $F_i$ are orthogonal to the constant function on $\partial\Omega$. Therefore, for each $i=1,\dots,m$ we have
	$$
	\sigma_1(\Omega)\le\frac{\int_\Omega|\nabla F_i|^2\,dv_M}{\int_{\partial\Omega}F_i^2\,dv_{\partial\Omega}}
	$$
	Taking a sum over $i$ we get
	$$
	\sigma_1(\Omega)P_o (\Omega)=\sigma_1(\Omega)\sum_{i=1}^{m}\int_{\partial\Omega}F_i^2\,dv_{\partial\Omega}
		\le \sum_{i=1}^{m}\int_\Omega|\nabla F_i|^2\,dv_M = Q(\Omega)
	$$
	By the definition of $Q$ and since $H$ is non-increasing we have
	\[
	\begin{aligned}
	Q(\Omega)- Q(B_R)	&=\int_{\Omega} H(r)\,dv_M-\int_{B_R}H(r)\,dv_M\\
	&=\int_{\Omega\setminus B_R} H(r)\,dv_M-\int_{B_R\setminus\Omega}H(r)\,dv_M\\
	&\le \int_{\Omega\setminus B_R} \left(H(R)-H(r)\right)\,dv_M\le 0.
	\end{aligned} 
	\]
	Therefore $Q(\Omega)\le Q(B_R)$ and, using Theorem \ref{isoperimetriqueponderee} we get
	\begin{equation}
	\label{eq:steklov}
	\sigma_1(\Omega)P_o (B_R) \le \sigma_1(\Omega)P_o (\Omega) \le Q(\Omega)
		\le Q(B_R)
	\end{equation} 
	and
	$$
	\sigma_1(\Omega) \le \frac{Q(B_R)}{P_o (B_R)} = \sigma_1(B_R).
	$$
	Moreover, in case of equality we have $P_o (\Omega)=P_o (B_R)$ which implies that
	$\Omega=B_R$.
\end{proof}
By combining \eqref{eq:quantitative} and \eqref{eq:steklov} it is easy to produce the stability version of our quantitative inequality:
\begin{theorem}
	Let $M$ be a non-compact ROSS. For any $v>0$, there exists a positive constant $C=C(M,v)$ such that, for any domain $\Omega\subset M$ with $|\Omega|=v$ we have
	\[
	\sigma_1(\Omega)\left(1+C|\Omega\setminus B|^2\right)\le\sigma_1(B),
	\]
	where $B$ is a geodesic ball with $|B|=|\Omega|$
\end{theorem}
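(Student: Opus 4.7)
The plan is to read off a stronger inequality from the proof of Theorem~\ref{steklovonH} and combine it with Theorem~\ref{thm-quantitative_isop}. Fix $v>0$ and a domain $\Omega\subset M$ with $|\Omega|=v$. As in the proof of Theorem~\ref{steklovonH}, I choose the base point $o\in M$ via Lemma~\ref{lem-Base_point} and let $B=B_R$ be the geodesic ball of radius $R$ centered at $o$ with $|B|=v$. Because $M$ is two-point homogeneous, both $\sigma_1(B)$ and $P_o(B)$ depend only on $M$ and $v$ (the radius $R=v^{-1}(v)$ is determined by the volume alone, and the integrand $a(r)^2$ only depends on $r$).

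The key observation is that the chain \eqref{eq:steklov} established inside the proof of Theorem~\ref{steklovonH} actually yields the quantitative bound
\[
\sigma_1(\Omega)\,P_o(\Omega)\;\le\;Q(\Omega)\;\le\;Q(B)\;=\;\sigma_1(B)\,P_o(B),
\]
rather than merely the qualitative conclusion $\sigma_1(\Omega)\le\sigma_1(B)$. Splitting $P_o(\Omega)=P_o(B)+\bigl(P_o(\Omega)-P_o(B)\bigr)$ and invoking the quantitative weighted isoperimetric inequality \eqref{eq:quantitative} from Theorem~\ref{thm-quantitative_isop}, which provides a constant $\tilde C=\tilde C(M,v)>0$ with $P_o(\Omega)-P_o(B)\ge \tilde C\,|\Omega\setminus B|^2$, I obtain
\[
\sigma_1(\Omega)\bigl[P_o(B)+\tilde C\,|\Omega\setminus B|^2\bigr]\;\le\;\sigma_1(\Omega)\,P_o(\Omega)\;\le\;\sigma_1(B)\,P_o(B).
\]
Dividing both sides by $P_o(B)>0$ and setting $C=\tilde C/P_o(B)$, which again depends only on $M$ and $v$, yields the desired inequality $\sigma_1(\Omega)\bigl(1+C\,|\Omega\setminus B|^2\bigr)\le\sigma_1(B)$.

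There is no serious obstacle here: the proof is a bookkeeping of constants built on two results that are already in the right form. The only point worth emphasizing is that $P_o(B)$, and hence the final constant $C(M,v)$, really is independent of the particular domain $\Omega$ in the class of volume $v$; this is automatic on a harmonic manifold since both the radius and the weighted perimeter of a ball are functions of its volume only.
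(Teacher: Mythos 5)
Your proposal is correct and follows essentially the same route as the paper: the authors likewise combine the chain $\sigma_1(\Omega)P_o(\Omega)\le Q(\Omega)\le Q(B_R)=\sigma_1(B_R)P_o(B_R)$ from the proof of Theorem~\ref{steklovonH} with the quantitative isoperimetric bound of Theorem~\ref{thm-quantitative_isop}, arriving at the same constant $C/P_o(B_R)$. Your remark that $P_o(B_R)$ depends only on $M$ and the volume is exactly the point the paper also makes to justify that the final constant is uniform over the class of domains of volume $v$.
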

{
\begin{proof}
	Let $o$ be the base point given by Lemma \ref{lem-Base_point} and $B_R$ the geodesic ball centered in $o$ whose volume is equal to $|\Omega|$. Because the ROSS are harmonic manifolds, the weighted perimeter of the ball $B_R$ does not depend on the base point, and its radius only depends on its volume. Therefore, the quantity $P_o(B_R)$ only depends on the volume $|\Omega|$.
	
	From inequalities  \eqref{eq:quantitative} and \eqref{eq:steklov} we get
	\[
	\sigma_1(\Omega)\Bigl(1+\frac{C}{P_o(B_R)}|\Omega\setminus B_R|^2\Bigr) \le \sigma_1(B_R)
	\]
	where the constant $C$ is given by Theorem \ref{thm-quantitative_isop}, and thus only depends on $|\Omega|$.
\end{proof}
}
\begin{remark}
	Note that in general we can not freely state that, for each $i=1,\dots,m$,
	\[
	\int_{\Omega} |\nabla F_i|^2\,dv_M\le \int_{B_R} |\nabla F_i|^2\,dv_M. 
	\]	
	If this were the case then by an argument similar to that in the above  proof,  we would get the stronger inequality 
	\[
	\sum_{i=1}^n\frac{1}{\sigma_i(\Omega)}\ge\frac{n}{\sigma_i(B_R)}.
	\]
	Compare with \cite[Theorem $5.1$]{Brasco-DePhilippis-Ruffini}. 
\end{remark}

%
\section{The Brock-Weinstock inequality does not hold on $\mathbb S^m$} \label{sec:nottheball}

In this section we denote by $\sigma_k(A)$ the $k-$th Steklov eigenvalue on the sphere $\mathbb S^m$ of a domain $A\subset\mathbb S^m$, and we show that there exists a symmetrical strip $\Omega\subset \mathbb S^m$ such that $\sigma_1(\Omega)>\sigma_1(B)$ where $B$ is a ball (in the sphere) with the same volume as $\Omega$. In particular, the ball does not maximize the first Steklov eigenvalue on the sphere, that is, the Brock-Weinstock inequality does not hold true. The choice of a  spherical strip is due to the fact that in this case we are able  to compute explicitly its spectrum (cf. \cite[example 4.2.5]{Girouard-Polterovich} for a similar calculation for annulus in Euclidean spaces).

\begin{theorem}
	Let $M=\mathbb S^m$. There exists  $R>\frac{\pi}{2}$ such that $\sigma_1(\Omega_R)>\sigma_1(B_S)$, where $\Omega_R$ is the intersection of two geodesic balls of radius $R$ with antipodal centers, and $B_S$ is a geodesic ball such that $|\Omega_R|=|B_S|$.
\end{theorem}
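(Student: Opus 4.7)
The plan is to diagonalize the Steklov operator of $\Omega_R$ explicitly using the $O(m)$-symmetry of the strip, and then to compare mode by mode with the Steklov eigenvalue of the equal-volume geodesic ball $B_S$. Let $N$ and $S$ be the two antipodal centers and parametrize $\mathbb S^m$ by $(\theta,\omega)$, with $\theta=d(N,\cdot)\in[0,\pi]$ the colatitude from $N$ and $\omega\in\mathbb S^{m-1}$ the transverse direction. Then $\Omega_R=\{\pi-R\le\theta\le R\}$ and every harmonic function on $\Omega_R$ decomposes as $u=\sum_\ell f_\ell(\theta)\,Y_\ell(\omega)$, where $Y_\ell$ is a spherical harmonic of degree $\ell$ on $\mathbb S^{m-1}$ and each radial factor satisfies the Jacobi-type ODE
$$f''+(m-1)\cot\theta\, f'-\frac{\ell(\ell+m-2)}{\sin^2\theta}\,f=0.$$
The reflection $\theta\mapsto\pi-\theta$ preserves $\Omega_R$, so each $\ell$-sector further splits into a symmetric and an antisymmetric subspace, yielding at most two Steklov eigenvalues per mode.

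For $\ell=0$ the ODE integrates directly: the solutions are the constant $1$ (symmetric, giving the trivial eigenvalue) and an antiderivative of $\sin^{1-m}\theta$ (antisymmetric), from which the Steklov condition $\partial_\nu u=\sigma u$ at $\theta=R$ reads off an explicit eigenvalue $\sigma_0(R)=\sin^{1-m}(R)\bigl/\int_{\pi/2}^R\sin^{1-m}(s)\,ds$. For $\ell=1$, the change of variable $x=\cos\theta$ turns the equation into a Gegenbauer-type ODE whose two independent solutions are explicit; on $\mathbb S^2$ they are $\csc\theta$ (symmetric) and $\cot\theta$ (antisymmetric), giving the closed formulae $\sigma_{1,s}(R)=-\cot R$ and $\sigma_{1,a}(R)=-1/(\sin R\cos R)$, and analogous associated-Legendre expressions in higher dimensions. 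The parity ensures that the condition at $\theta=\pi-R$ is automatically satisfied once it holds at $\theta=R$, so these formulae are the genuine Steklov eigenvalues of the corresponding subspaces.

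To finish, I compute $|\Omega_R|=|\mathbb S^{m-1}|\int_{\pi-R}^{R}\sin^{m-1}\theta\,d\theta$, invert the volume function to obtain the radius $r(R)$ of $B_S$, and use the compact analogue of Corollary \ref{cor-Steklov_spectrum_balls} to write $\sigma_1(B_S)=a_1'(r(R))/a_1(r(R))$, where $a_1$ is the radial factor of the first Steklov eigenfunction of a ball (this simplifies to $\sigma_1(B_r)=1/\sin r$ on $\mathbb S^2$). Both $\sigma_1(\Omega_R)$ and $\sigma_1(B_S)$ are then explicit functions of $R$, and to prove the theorem it suffices to exhibit one value $R^\ast\in(\pi/2,\pi)$ at which $\sigma_1(\Omega_{R^\ast})>\sigma_1(B_S)$: on $\mathbb S^2$, for instance, evaluating at $R^\ast=5\pi/6$ gives $\sigma_0(R^\ast)\approx 1.52$ against $\sigma_1(B_S)\approx 1.46$ (with $\sigma_0<\sigma_{1,s},\sigma_{1,a}$ at this point). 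The main technical obstacle is controlling the modes $\ell\ge 2$: the angular part of $|\nabla u|^2$ contributes at least $\ell(\ell+m-2)f^2/\sin^2\theta$ pointwise, which via the Rayleigh quotient forces the corresponding Steklov eigenvalues to exceed $\sigma_0(R^\ast)$ at least near $R^\ast$, but making this lower bound uniform across the full range $R\in(\pi/2,\pi)$ requires a careful handling of the mixing between radial and angular derivatives on the strip.
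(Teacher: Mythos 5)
Your separation of variables is the same engine the paper runs on (decomposition into $\mathbb{S}^{m-1}$-harmonics times radial profiles, split by the reflection $\theta\mapsto\pi-\theta$), and your $\ell=0$ and $\ell=1$ computations are correct. But the proposal stops exactly where the work is. To conclude $\sigma_1(\Omega_{R^\ast})>\sigma_1(B_S)$ you need a \emph{lower} bound on $\sigma_1(\Omega_{R^\ast})$, hence control of \emph{every} sector $\ell\ge2$ at $R^\ast$; the pointwise remark that the angular part of $|\nabla u|^2$ contributes $\ell(\ell+m-2)f^2/\sin^2\theta$ compares an interior integral to a boundary integral and does not by itself bound the Rayleigh quotient. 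The paper closes this with two concrete devices you should import: (i) the Taylor coefficients of each radial profile $a_k$ are nonnegative (because the coefficients of $r\cot r$ past the constant term are nonpositive), which forces $a_k,a_k'\ge0$ and hence that the ``$+$'' combination gives the smaller eigenvalue $C_k^+(R)\le C_k^-(R)$ in each sector; and (ii) integrating the radial ODE and using $h(R)=-h(\pi-R)$ yields $C_k^+(R)=C_1^+(R)+\tfrac{\lambda_{c,k}-(m-1)}{a_k(R)+a_k(\pi-R)}\int_{\pi-R}^{R}a_k(t)\sin^{-2}(t)\,dt\ge C_1^+(R)$, which disposes of all $\ell\ge2$ at once. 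Note also that only one value of $R$ is needed, so no uniformity in $R$ is required; on the other hand the statement is for every $m$, and your numerics cover only $m=2$.

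There is, however, one respect in which your plan is not a detour but a necessary correction. Your antisymmetric $\ell=0$ eigenvalue $\sigma_0(R)=\sin^{1-m}(R)/\int_{\pi/2}^{R}\sin^{1-m}(s)\,ds$ is a genuine Steklov eigenvalue of the strip (its eigenfunction has zero mean on $\partial\Omega_R$) that is \emph{absent} from the paper's list $\{C_k^{\pm}(R)\}$: for $k=0$ the profile $a_0$ is constant, so $F_0^+-F_0^-\equiv0$ and the claimed Hilbert basis of $L^2(\partial\Omega_R)$ misses the span of $\mathbf{1}_{S_R^+}-\mathbf{1}_{S_R^-}$. This is not cosmetic: for $m=2$, writing $\alpha=\pi-R$, one has $\sigma_0(R)\sim 1/(\alpha\ln(1/\alpha))$ while $C_1^+(R)\sim1/\alpha$ and $\sigma_1(B_S)\sim1/(2^{1/2}\alpha)$, so for $R$ close to $\pi$ the first eigenvalue of the strip is $\sigma_0(R)$ and it drops \emph{below} $\sigma_1(B_S)$ --- the paper's asymptotic endgame as $R\to\pi$ does not survive this mode (the same happens for $m\ge3$, where $\sigma_0\sim(m-2)/\alpha<(m-1)2^{-1/m}/\alpha$). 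Your choice of a moderate $R^\ast=5\pi/6$, where $\sigma_0\approx1.52$ still beats $1/\sin S\approx1.47$, is therefore the viable route. So: keep the fixed-$R^\ast$ comparison, add the $\ell\ge2$ control along the lines of the paper's Claims 1--2 together with a verification that those eigenvalues also dominate $\sigma_0(R^\ast)$, and carry out the analogous computation in each dimension $m\ge3$.
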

\begin{proof}
Let $o^+$ and $o^-$ be two antipodal points on the unit sphere. Viewing $\S^m$ as a submanifold of $\R^{m+1}$ we have $T_{o^+}\S^m=T_{o^-}\S^m$ as subspaces of $\R^{m+1}$.  For $x\in\S^m$ we note $r^+(x)=d(o^+,x)$, $r^-(x)=d(o^-,x)$, and $w(x)$ the unique unit vector of $T_{o^+}\S^m=T_{o^-}\S^m$ pointing to $x$ from $o^+$ and $o^-$:
$$
x = \exp_{o^+}(r^+(x)w(x)) = \exp_{o^-}(r^-(x)w(x)).
$$

Let $R>\frac\pi2$ and $\Omega_R=B_R(o^+)\cap B_R(o^-)$, $S_R^+=\partial B_R(o^+)$, $S_R^-=\partial B_R(o^-)$ (so that $\partial\Omega_R=S_R^+\cup S_R^-$). Let $(f_k)_{k\in\N}$ be a basis of spherical harmonics of $L^2(\mathbb S^{m-1})$ where $\mathbb S^{m-1}$ is the unit sphere of $T_{o^\pm}\mathbb S^{m}$ and the ordering is such that the corresponding sequence of eigenvalues is non-decreasing. By Proposition \ref{prop-Spherical_harm} and Corollary \ref{cor-Steklov_spectrum_balls} we know that, for each $k\in\N$, there exist radial functions $a_k(r^+)$ and $a_k(r^-)$ such that
\[
F_k^\pm:=a_k(r^\pm)f_k(w),\quad k\in\mathbb N
\]
form a basis of Steklov eigenfunctions for $L^2(\partial B_R(o^\pm))$. Clearly we have that the $F_k$'s are harmonic in $\Omega_R$. Moreover, a direct computation shows that, on $\partial\Omega_R$,
\[
\frac{\partial(F_k^- \pm F_k^+)}{\partial\nu}=C_k^\pm(R)(F_k^- \pm F_k^+)
\]
with
\[
C_k^+(R)=\frac{a_k'(R)-a_k'(\pi-R)}{a_k(R)+a_k(\pi-R)},\quad C_k^-(R)=\frac{a_k'(R)+a_k'(\pi-R)}{a_k(R)-a_k(\pi-R)}.
\]
Thus $C_k^\pm(R)$ are Steklov eigenvalues of $\Omega_R$. Moreover, they constitute the whole Steklov spectrum. Indeed, as $\partial\Omega_R$ is the disjoint union of $S_R^+$ and $S_R^-$ we get an Hilbert basis of $L^2(\partial\Omega_R)$ by taking $\{f_k^+\ |\ k\in\N\}\cup\{f_k^-\ |\ k\in\N\}$ where $f_k^+=f_k(w)_{|_{S_R^+}}$ and $f_k^-=f_k(w)_{|_{S_R^-}}$. As we have
$$
(F_k^+)_{|_{\partial\Omega_R}} = a_k(R)f_k^+ + a_k(\pi-R)f_k^-\ \ \mbox{ and }\ \ 
(F_k^-)_{|_{\partial\Omega_R}} = a_k(\pi-R)f_k^+ + a_k(R)f_k^-,
$$
and since $0<a_k(\pi-R)<a_k(R)$ we easily get that $\{(F_k^++F_k^-)_{|_{\partial\Omega_R}} \ |\ k\in\N\}\cup\{(F_k^+-F_k^-)_{|_{\partial\Omega_R}} \ |\ k\in\N\}$ is an Hilbert basis of $L^2(\partial\Omega_R)$. Therefore, the Steklov spectrum of $\Omega_R$ is $\{C_k^+(R)\ |\ k\in\N\}\cup\{C_k^-(R)\ |\ k\in\N\}$.

We divide the rest of the proof into three parts: the first two aimed to prove that $\sigma_1(\Omega_R)=C_1^+(R)$ and the last one for comparing it to the first eigenvalue of the ball.

%
%
\medskip
\noindent \textbf{Claim 1:} \textit{for all $\frac{\pi}{2}<R<\pi$ and all $k\in\N$ we have $C_k^+(R)\le C_k^-(R)$.}

In fact, we will prove that, for any $k\in\N$, there exists $p\in\N$ such that $a_k(r) = r^p + \sum_{i\ge p+1}\xi_ir^i$ with $\xi_i\ge 0$ for all $i\ge p+1$.

Let $k\in\N$ and let $p\in\N$ be such that the eigenvalue of $\S^{m-1}$ associated to $f_k$ is $\lambda_{c,k}=p(p+m-2)$. Following the proof of Proposition \ref{prop-Spherical_harm} we have $a_k(r)=\sum_{i\ge0} \xi_i r^i$,
with the coefficients $\xi_i=0$ for $i<p$, $\xi_p=1$ and, for $i>p$, defined recursively by Formula~\eqref{coef0} 
\begin{equation}\label{coef3}
(i(i+m-2)-p(p+m-2))\xi_i + \sum_{j=0}^{i-1}(j\alpha_{i-j}-\beta_{i-j})\xi_j=0.
\end{equation}
If $i>p$ then $i(i+m-2)-p(p+m-2)>0$, so that the positivity of $\xi_i$ follows by \eqref{coef3} once we  show that $j\alpha_{i-j}-\beta_{i-j}\le 0$ for any $j=0,\dots,i-1$. As $M$ is the round sphere the coefficients $\alpha_j$ and the $\beta_j$ are given by
\[
r(m-1)\cot(r)=\sum_{j\ge0} \alpha_jr^j\ \ \ \mbox{ and }\ \ \  r^2\frac{p(p+m-2)}{\sin^2(r)}=\sum_{j\ge0} \beta_jr^j.
\]
Since $\cot'(r)=-\sin(r)^{-2}$, the above equalities give
\[
\sum_{j\ge0}\frac{j-1}{m-1}\alpha_j r^{j-2}=-\sum_{j\ge0} \frac{\beta_j}{p(p+m-2)}r^{j-2},
\]
so that $\frac{(j-1)p(p+m-2)}{m-1}\alpha_j =-\beta_j$, and
$j\alpha_{i-j}-\beta_{i-j}=\alpha_{i-j}\left(j+\frac{(i-j-1)p(p+m-2)}{m-1}\right)$.
We conclude that $j\alpha_{i-j}-\beta_{i-j}\le 0$ for $j\le i-1$ since  the coefficients of the power series expansion of the cotangent are all non-positive,  but the first one (cf. \cite[chapter 23]{Aigner-Ziegler}).

As a consequence of the non-negativity of the coefficients $\xi_i$ we have that, for any $k\in\N$, the functions $a_k$ and $a'_k$ are non-negative, which easily entails that $C_k^+(R)\le C_k^-(R)$.

%
%
\medskip
\noindent \textbf{Claim 2:} \textit{ for all $\frac{\pi}{2}<R<\pi$ we have $\sigma_1(\Omega_R) = C_1^+(R)$.}

Following Claim 1, we just have to show that $C_1^+(R)\le C_k^+(R)$ for every $k\ge1$.
By Equation \eqref{liouville} we have that for $0<r<\pi$
\begin{equation}\label{eqn-ODE_a_k}
a_k''(r)+(m-1)h(r)a_k'(r)+\lambda_{c,k}h'(r)a_k(r)=0,
\end{equation} 
where $\lambda_{c,k}$ is the $k-$th eigenvalue of $\S^{m-1}$. In particular, for $k=1$ we have $\lambda_{c,k}=m-1$. Integrating equation \eqref{eqn-ODE_a_k} between $r_0$ and $r$ and letting $r_0$ tend to $0$ gives
\begin{equation}\label{eqn-derivative-a_1}
a'_1(r)=m-(m-1)h(r)a_1(r).
\end{equation}
A straightforward computation then brings to
\begin{equation}\label{sigmaO}
C_1^+(R)=\frac{a_1'(R)-a_1'(\pi-R)}{a_1(R)+a_1(\pi-R)}=(m-1)\frac{\cos(\pi-R)}{\sin(\pi-R)}.
\end{equation} 

For $1\le k\le m$ we have $a_k=a_1$ as the function $f_k$ is a first spherical harmonic of $\S^{m-1}$, therefore we have $C_k^+(R)=C_1^+(R)$. Assume now that $k\ge m+1$. In particular, there exists $p\ge 2$ such that $\lambda_{c,k}=p(p+m-2)>m-1$ and $a_k(r)\sim_{0}r^p$. Integrating equation \eqref{eqn-ODE_a_k} between $r_0>0$ and $r>r_0$ we get
\[
a_k'(r)-a_k'(r_0)+(m-1)\int_{r_0}^rh(t)a_k'(t)\,dt+\lambda_{c,k}\int_{r_0}^rh'(t)a_k(t)\,dt=0
\]
which becomes, after an integration by parts,
\[
a_k'(r)-a_k'(r_0)+(m-1)\left(h(r)a_k(r)-h(r_0)a_k(r_0)\right)+\left(\lambda_{c,k}-(m-1)\right) \int_{r_0}^rh'(t)a_k(t)\,dt=0.
\]
Since $a_k(r)=r^p+o(r^p)$, for $r\sim0$,  with $p\ge2$, we can pass to the limit  as $r_0$ tends to $0$ to get
\[
a_k'(r)=-(m-1)h(r)a_k(r)-(\lambda_{c,k}-(m-1))\int_0^rh'(t)a_k(t)\,dt.
\]
Plugging this expression in that of  $C_k^+(R)$ and using that $h(R)=-h(\pi-R)$ we get
\begin{eqnarray}
	C_k^+(R) & = & \frac{a_k'(R)-a_k'(\pi-R)}{a_k(R)+a_k(\pi-R)} \nonumber \\
	& = & \frac{-(m-1)h(R)a_k(R)+(m-1)h(\pi-R)a_k(\pi-R)}{a_k(R)+a_k(\pi-R)} \nonumber \\
	& &	-\frac{\lambda_{c,k}-(m-1)}{a_k(R)+a_k(\pi-R)}\int_{\pi-R}^Rh'(t)a_k(t)\,dt \nonumber \\
	& = & \frac{(m-1)h(\pi-R)(a_k(R)+a_k(\pi-R))}{a_k(R)+a_k(\pi-R)} \nonumber \\
	& &	-\frac{\lambda_{c,k}-(m-1)}{a_k(R)+a_k(\pi-R)}\int_{\pi-R}^Rh'(t)a_k(t)\,dt \nonumber \\
	& = & C_1^+(R)+\frac{\lambda_{c,k}-(m-1)}{a_k(R)+a_k(\pi-R)}\int_{\pi-R}^R\frac{a_k(t)}{\sin^2(t)}\,dt \nonumber \\
	& \ge & C_1^+(R). \nonumber
\end{eqnarray}

This concludes the proof that $\sigma_1(\Omega_R)=C_1^+(R)$.

%
%
\medskip
\noindent \textbf{Claim 3:} \textit{for $R$ close enough to $\pi$ we have $\sigma_1(\Omega_R) > \sigma_1(B_S)$, where $S$ is such that $|B_S|=|\Omega_R|$.}

Let $B_S$ the ball of radius $S$ with the volume $|B_S|=|\Omega_R|$. As $|\S^m\setminus B_S|=|\S^m\setminus \Omega_R|$ we have
\begin{equation}\label{0}
\int_0^{\pi-S} \sin^{m-1}(t)\,dt=2\int_0^{\pi-R} \sin^{m-1}(t)\,dt.
\end{equation}
From Claim 2 we have $\sigma_1(\Omega_R)=(m-1)\frac{\cos(\alpha)}{\sin(\alpha)}.$ where $\alpha=\pi-R$.
Using equation \eqref{eqn-derivative-a_1} we have
\[
\begin{aligned}
\sigma_1(B_S)&=\frac{a'_1(S)}{a_1(S)} \\
&=\frac{m}{a_1(S)}-(m-1)\frac{\cos(S)}{\sin(S)}\\
&=\frac{m\sin^{m-1}(S)}{\int_0^S\sin^{m-1}(t)dt}-(m-1)\frac{\cos(S)}{\sin(S)}\\
&=\frac{m\sin^{m-1}(\beta)}{2\int_0^{\pi-\beta} \sin^{m-1}(t)\,dt}+(m-1)\frac{\cos(\beta)}{\sin(\beta)},
\end{aligned}
\]
where in the last equality we set $\beta=\pi-S$. Notice that $\alpha$ and $\beta$ tend to $0$ as $R$ tends to $\pi$, so that we have $\sigma_1(B_S)\sim (m-1)\frac{\cos(\beta)}{\sin(\beta)}$.
Moreover, as a consequence of \eqref{0}, we get that
\[
\frac{d\beta}{d\alpha}\sin(\beta)^{m-1}=2\sin(\alpha)^{m-1}
\]
which easily entails that $\beta\sim 2^\frac1m \alpha$ (exploiting the fact that every function in role is positive). Those information boil up, after a simple computation, into
\[
\lim_{R\to\pi}\frac{\sigma_1(\Omega_R)}{\sigma_1(B_S)} = 2^\frac1m > 1. 
\]
This concludes the proof of the fact that, for $R<\pi$, $R$ close enough to $\pi$, the ball does not maximize the first Steklov eigenvalue on the sphere.
\end{proof}

%
%
%
%

\small
\bibliographystyle{plain}

\bibliography{steklovmanifolds}

\vspace{10mm}

\begin{flushleft}
	Philippe Castillon \\
	\textsc{imag} (\textsc{u.m.r. c.n.r.s.} 5149) \\
	D\'ept. des Sciences Math\'ematiques, CC 51 \\
	Univ. de Montpellier \\
	34095 \textsc{Montpellier} Cedex 5, France \\
	\texttt{philippe.castillon\symbol{64}umontpellier.fr}
\end{flushleft}

\vspace{5mm}

\begin{flushleft}
	Berardo Ruffini \\
	\textsc{imag} (\textsc{u.m.r. c.n.r.s.} 5149) \\
	D\'ept. des Sciences Math\'ematiques, CC 51 \\
	Univ. de Montpellier \\
	34095 \textsc{Montpellier} Cedex 5, France \\
	\texttt{berardo.ruffini\symbol{64}umontpellier.fr}
\end{flushleft}

\end{document}